\theoremstyle{plain}
\newtheorem{lemma}{Lemma}
\newtheorem{theorem}{Theorem}
\theoremstyle{definition}
\newtheorem{definition}{Definition}
\theoremstyle{remark}
\newtheorem{example}{Example}
\tikzset{>=stealth}
\tikzstyle{node} = [circle, minimum size = 1.4mm, inner sep = 0mm, color=black, fill]
\tikzstyle{hyperedge} = [rectangle, minimum width = 5mm, minimum height = 5mm, draw, inner sep = 0mm]
\tikzstyle{HG} = [align = center]
\tikzstyle{circledge} = [circle, minimum size = 7mm, inner sep = 0mm, color=black, draw]
\newcommand{\downsquigarrow}{\mathbin{\rotatebox[origin=c]{-90}{$\rightsquigarrow$}}}
\newcommand{\eqdef}{\mathrel{\mathop:}=}
\newcommand{\neck}{{\mathord{\circlearrowleft}}}
\newcommand{\brac}{{\mathord{\circledcirc}}}
\newcommand{\unneck}{{\mathord{\not\circlearrowleft}}}
\newcommand{\rev}{\mathrm{R}}
\newcommand{\LC}{\mathrm{L}}
\newcommand{\CS}{\mathrm{CS}}
\newcommand{\SG}{\mathop{\mathrm{SG}}}
\title{Cyclic Shift in the Lambek Calculus}
\author{Tikhon Pshenitsyn
	\thanks{The study was supported by RFBR, project number 20-01-00670, by the Theoretical Physics and Mathematics Advancement Foundation ``BASIS'', and by the Interdisciplinary Scientific and Educational School of Moscow University ``Brain, Cognitive Systems, Artificial Intelligence''.}
	\\
	Department of Mathematical Logic and Theory of Algorithms\\Faculty of Mathematics and Mechanics\\
		Lomonosov Moscow State University
		\\GSP-1, Leninskie Gory, Moscow, 119991, Russian Federation
}
\begin{document}
	\maketitle
\begin{abstract}
	We enrich the Lambek calculus with the cyclic shift operation $A^\neck$, which is expected to model the closure operator of formal languages with respect to cyclic shifts. We introduce a Gentzen-style calculus and prove cut elimination. Secondly, we turn to categorial grammars based on this calculus and show that they can generate non-context-free languages; besides, we consider a related calculus where the cyclic shift is a structural rule, and compare recognizing power of these two calculi. Thirdly, we attempt to embed the Lambek calculus with the cyclic shift operation in the hypergraph Lambek calculus. This results in considering a ``bracelet'' operation, which can be defined through the cyclic shift, union, and the reversal operation.
\end{abstract}

\section{Introduction}\label{sec_intr}
	The Lambek calculus is a logical formalism designed to model syntax of natural languages; it was firstly introduced by Joachim Lambek \cite{Lambek58}. In the standard Lambek calculus $\LC$, there is a set of primitive types $Pr$, and types are built from primitive ones using binary connectives $\backslash$, $/$ and $\cdot$. Throughout this paper, we consider the Gentzen-style Lambek calculus (because it is easier to reason about the calculus then); that is, its derivable objects are sequents of the form $A_1\dotsc,A_n\to B$ where $A_i$ and $B$ are all types. Its axioms and rules are the following:
	$$
	\infer[(\mathrm{Ax})]{A\to A}{} 
	\qquad
	\infer[(\backslash\to)]{\Gamma, \Pi, A \backslash B, \Delta \to C}{\Gamma, B, \Delta \to C & \Pi \to A}
	\qquad
	\infer[(\to\backslash)]{\Pi \to A \backslash B}{A, \Pi \to B}
	\qquad
	\infer[(\cdot\to)]{\Gamma, A \cdot B, \Delta \to C}{\Gamma, A, B, \Delta \to C}
	$$
	$$
	\infer[(/\to)]{\Gamma, B / A, \Pi, \Delta \to C}{\Gamma, B, \Delta \to C & \Pi \to A }
	\quad
	\infer[(\to/)]{\Pi \to B / A}{\Pi, A \to B}
	\quad
	\infer[(\to\cdot)]{\Pi, \Psi \to A \cdot B}{\Pi \to A & \Psi \to B}
	\quad
	\infer[(\mathrm{cut})]{\Gamma, \Pi, \Delta \to B}{\Pi \to A & \Gamma, A, \Delta \to B}
	$$
	There is a formal language semantics for types and sequents of the Lambek calculus. Namely, we assign a formal language over some fixed alphabet $\Sigma$ to each primitive type: $w(p)\subseteq \Sigma^\ast$ for each $p\in Pr$; we extend the assignment function $w$ to all types and sequents as follows:
	\begin{multicols}{2}
		\begin{enumerate}
			\item $w(B\backslash A)=\{u\in\Sigma^\ast\mid \forall v\in w(B) \; vu\in w(A)\}$;
			\item $w(A/B)=\{u\in\Sigma^\ast\mid \forall v\in w(B) \; uv\in w(A)\}$;
			\item $w(A\cdot B)=\{uv \mid u \in w(A), v \in w(B)\}$;
			\item $w(A_1,\dots,A_n)=w(A_1)\cdot\dotsc\cdot w(A_n)$ where $w(A) \cdot w(B) = w(A \cdot B)$;
			\item $w(\Pi\to A)$ is true if and only if $w(\Pi)\subseteq w(A)$.
		\end{enumerate}
	\end{multicols}
	It is known that the Lambek calculus is sound and complete w.r.t. this semantics (i.e. $\LC\vdash \Pi \to A$ if and only if $w(\Pi\to A)$ is true for all $w$; see \cite{Pentus95}). Consequently, $\backslash,/,\cdot$ can be interpreted as operations on formal languages, and derivable sequents of the form $A\to B$ as valid formulas over the signature $\backslash,/,\cdot,\subseteq$.
	
	Since 1958 many extensions and variants of $\LC$ have been studied; a number of new operations have been added to it. Some of them have been introduced for linguistic purposes (e.g. $\diamondsuit$ and $\square$ modalities in \cite{Moortgat96} or the $!$ modality studied e.g. in \cite{Kanovich17}). However, several connectives that do not have linguistic applications have also been studied because they correspond to some natural operations on formal languages. Let us look at one of such examples in detail (it will play an important role later). Namely, consider the reversal operation on formal languages: given a language $L$, the language $L^\rev$ is defined as $L^\rev\eqdef\{a_n\dotsc a_1\mid a_1\dotsc a_n\in L\}$. A question arises how to axiomatize this operation along with $\backslash,/,\cdot$. The answer to this question is the Lambek calculus enriched with the reversal operation $^\rev$. In this calculus denoted $\LC^\rev$, a unary operation $A^\rev$ is added; axioms and rules of $\LC^\rev$ include those of $\LC$ and the following ones:
	$$
	\infer[(^\rev\to^\rev)]{A_n^\rev,\dots, A_2^\rev, A_1^\rev \to B^{\rev}}{A_1,A_2,\dots, A_n \to A}
	\qquad
	\infer[(^{\rev\rev}\to)]{\Gamma, A^{\rev\rev},\Delta\to B}{\Gamma, A,\Delta\to B}
	\qquad
	\infer[(\to^{\rev\rev})]{\Pi \to A^{\rev\rev}}{\Pi \to A}
	$$
	It is not a simple question if $\LC^\rev$ is complete w.r.t. the formal language semantics (where we set $w(A^\rev)\eqdef w(A)^\rev$). For $\LC^\rev$ completeness is proved in \cite{Kuznetsov12}. Nevertheless, even if completeness had not been established yet, it would have been interesting to study such a calculus from the syntactic point of view since it is a plausible candidate for the proposed semantics of $^\rev$.
	
	In this paper, we are going to consider another operation on formal languages called a \emph{cyclic shift}. Given a formal language $L$, $L^\neck$ denotes the language of cyclic shifts of words of $L$: $L^\neck\eqdef\{vu\mid uv\in L\}$. It has also been studied in the context of formal languages: e.g. in \cite[pp.143-144]{Hopcroft79} it is proved that context-free languages are closed under the cyclic shift, and in \cite{Jiraskova08, Maslov70} complexity of the cyclic shift is investigated. We aim to develop a calculus describing $^\neck$ and its cooperation with $\backslash$, $/$ and $\cdot$. In Section \ref{sec_Lneck}, we introduce the Lambek calculus with the cyclic shift $\LC^\neck$, and prove the cut elimination theorem; there we also discuss completeness-related issues. In Section \ref{sec_grammars}, we study recognizing power of categorial grammars based on $\LC^\neck$; it appears that they can generate non-context-free languages. In Section \ref{sec_Lbrac}, we attempt to embed $\LC^\neck$ in a generalization of the Lambek calculus to hypergraphs, which is defined and studied in \cite{Pshenitsyn21_2}; this leads us to another connective, which we call the bracelet operation and denote $A^\brac$; from the point of view of the formal language semantics, it is expected to correspond to the following operation: $L^\brac\eqdef L^\neck\cup (L^\rev)^\neck$. In Section \ref{sec_concl}, we summarize our observations and conclude.
\section{Lambek Calculus with the Cyclic Shift}\label{sec_Lneck}
	Before we start discussing the issue of modelling the cyclic shift in $\LC$, let us give some preliminary definitions and denotations. $\mathbb{N}$ contains $0$. $\Sigma^\ast$ is the set of all strings over the alphabet $\Sigma$ (including the empty string $\varepsilon$). $\Sigma^\circledast$ is the set of all strings over $\Sigma$ consisting of distinct symbols. The length $|w|$ of a word $w$ is the number of symbols in $w$. Each function $f:\Sigma\to\Delta$ can be naturally extented to a function $f:\Sigma^*\to\Delta^*$ ($f(\sigma_1\dots\sigma_k)=f(\sigma_1)\dots f(\sigma_k)$).
	\\
	Small Latin letters $p,q,r,\dotsc$ represent primitive types; capital Latin letters $A,B,C,\dotsc$ usually represent types; capital Greek letters $\Gamma,\Delta,\dotsc$ represent sequences of types (and $\Pi,\Psi,\Xi$ denote nonempty sequences). The notation $\mathcal{K}\vdash \Pi \to A$ means that the sequent $\Pi \to A$ is derivable in the calculus $\mathcal{K}$.
	
	Now, we are ready to propose an axiomatization for the $^\neck$ operation. Namely, the following inference rules are introduced:
	$$
	\infer[(\to^\neck)]{\Pi\to A^\neck}{\Pi\to A}
	\qquad
	\infer[(^\neck\to)]{B^\neck\to A^\neck}{B\to A^\neck}
	\qquad
	\infer[(\neck)]{\Psi,\Pi \to A^\neck}{\Pi,\Psi\to A^\neck}
	$$
	The Lambek calculus enriched with the $^\neck$ connective and with these three rules is denoted by $\LC^\neck$; its set of types is denoted $Tp^\neck$. It can be easily checked that all the above rules are sound w.r.t. the formal language semantics.
	\begin{example}
		Below some derivable in $\LC^\neck$ sequents are presented:
		\begin{multicols}{2}
			\begin{itemize}
				\item $\LC^\neck\vdash p^{\neck\neck}\leftrightarrow p^\neck$
				\item $\LC^\neck\vdash q\cdot p\to (p\cdot q)^\neck$
				\item $\LC^\neck\vdash (q\cdot p)^\neck \leftrightarrow (p\cdot q)^\neck$
				\item $\LC^\neck\vdash q\backslash p^\neck \leftrightarrow p^\neck/q$
				\item $\LC^\neck\vdash p\to q^\neck/(q^\neck/p)$
			\end{itemize}
		\end{multicols}
	\end{example}
	First of all, we need to establish the cut elimination property.
	\begin{theorem}
		The cut rule is eliminable in $\LC^\neck$.
	\end{theorem}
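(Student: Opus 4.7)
The plan is a standard Gentzen-style cut elimination by double induction: the primary induction is on the complexity $|A|$ of the cut formula, and the secondary induction is on the sum of the heights of the two premise derivations. Given a topmost application of cut, I would classify cases according to the last rule of each premise and whether the cut formula is principal there. Since $\LC^\neck$ extends $\LC$, the classical Lambek cases go through exactly as in the original proof, so the novel work is confined to the three new rules $(\to^\neck)$, $(^\neck\to)$, and $(\neck)$.

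The principal case for a cut on $A^\neck$ is easy. The only right-introduction rule for $A^\neck$ is $(\to^\neck)$ and the only left-introduction rule for it is $(^\neck\to)$; moreover, $(^\neck\to)$ consumes the whole antecedent in a single formula. Hence a principal cut on $A^\neck$ necessarily arises from a left premise $\Pi\to A^\neck$ obtained from $\Pi\to A$ by $(\to^\neck)$ and a right premise $A^\neck\to C^\neck$ obtained from $A\to C^\neck$ by $(^\neck\to)$, with empty outer context $\Gamma=\Delta=\varepsilon$. I would then replace the cut on $A^\neck$ by a cut on the strictly simpler formula $A$ between $\Pi\to A$ and $A\to C^\neck$, obtaining $\Pi\to C^\neck$ by the primary induction.

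The main obstacle is the interaction of cut with the structural rule $(\neck)$. The key observation is that $(\neck)$ applies only when the succedent has the form $C^\neck$, and this is exactly what is needed to re-apply it after pushing the cut up. If the right premise ends with $(\neck)$, so its conclusion is $\Gamma, A^\neck, \Delta\to C^\neck$, I push the cut through $(\neck)$ into its premise (which contains the cut formula in a cyclically rearranged antecedent), invoke the secondary induction hypothesis, and then re-apply $(\neck)$ with an appropriate splitting of the resulting antecedent into two non-empty parts to recover $\Gamma, \Pi, \Delta\to C^\neck$; a short case analysis on where the cut formula lies relative to the split shows that the two parts can always be chosen non-empty (using that $\Pi$ itself is non-empty in the cut rule). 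Symmetrically, if the left premise $\Pi\to A^\neck$ ends with $(\neck)$ and the cut formula is principal on the right, then the right premise must end with $(^\neck\to)$ and hence have succedent $C^\neck$; after pushing the cut into the premise of $(\neck)$ (getting $\Pi'\to C^\neck$ where $\Pi$ is a cyclic shift of $\Pi'$), the rule $(\neck)$ applies again to produce the desired $\Pi\to C^\neck$. The remaining new cases are routine: $(\to^\neck)$ does not touch the antecedent and commutes trivially with cut, while the rigid single-formula shape of $(^\neck\to)$ forces any antecedent occurrence of the cut formula in the right premise to be principal, so no separate non-principal case for it arises.
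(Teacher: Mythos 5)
Your proposal is correct and follows essentially the same route as the paper: the principal $A^\neck$ cut is reduced to a cut on the simpler $A$ between $\Pi\to A$ and $A\to C^\neck$ (the paper's Case 2a), the $(\neck)$ rule on either premise is handled by commuting the cut upward and re-applying $(\neck)$ (the paper's Case 1 variant and Case 2b), and the single-formula shape of $(^\neck\to)$ forces $\Gamma=\Delta=\varepsilon$ exactly as the paper observes. The only cosmetic difference is the induction measure (your lexicographic pair of cut-formula complexity and derivation heights versus the paper's total length of the two sequents), which does not change the argument.
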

	\begin{proof}
		The proof is organized similarly to that presented in \cite{Lambek58} (or in \cite{Takeuti13} for $\mathrm{LK}$); namely, it is done by induction on the total length of $\Pi \to A$ and $\Gamma,A,\Delta\to B$, and it essentially consists of examining several cases according to the last rules applies in the derivation. Let us consider only cases where the cyclic shift operation actively participates, particularly, where it causes a cyclic shift (the remaining cases are considered in \cite{Lambek58}):
		
		\textbf{Case 1.} Let the last rule in $\Gamma,A,\Delta\to B$ be not the one after $A$ appears. Let $B=D^\neck$.
		$$
		\infer[(\mathrm{cut})]{\Gamma, \Pi, \Delta \to B}{\Pi \to A & \infer[(\to^\neck)]{\Gamma, A, \Delta \to D^\neck}{\Gamma, A, \Delta \to D}}
		\quad\rightsquigarrow\quad
		\infer[(\to^\neck)]{\Gamma, \Pi, \Delta \to D^\neck}{\infer[(\mathrm{cut})]{\Gamma, \Pi, \Delta \to D}{\Pi\to A & \Gamma, A, \Delta \to D}}
		$$
		Similarly, the case when $(\neck)$ is applied instead of $(\to^\neck)$ can be dealt with.
		
		\textbf{Case 2.} Let $A=C^\neck$. We are not going to consider cases when the last rule in $\Pi\to A$ is not the one after $A$ appears, or when it is not $(\neck)$ (because these cases are covered by the proof from \cite{Lambek58}). Besides, if $A$ appears at the last step of the derivation within $\Gamma,A,\Delta\to B$, then $\Gamma\Delta=\varepsilon$, and $B=D^\neck$. Therefore, we can reorganize the proof as follows and apply the induction hypothesis:
		\\
		\textit{Case 2a.}
		$$
		\infer[(\mathrm{cut})]{\Pi \to D^\neck}{\infer[(\to^\neck)]{\Pi \to C^\neck}{\Pi \to C} & \infer[(^\neck\to)]{C^\neck \to D^\neck}{C\to D^\neck}}
		\rightsquigarrow 
		\infer[(\mathrm{cut})]{\Pi \to B}{\Pi \to C & C\to D^\neck}
		$$
		\textit{Case 2b.} Let $\Pi=\Phi,\Psi$.
		$$
		\infer[(\mathrm{cut})]{\Phi,\Psi \to D^\neck}{\infer[(\neck)]{\Phi,\Psi \to C^\neck}{\Psi,\Phi \to C^\neck} & \infer[(^\neck\to)]{C^\neck \to D^\neck}{C\to D^\neck}}
		\rightsquigarrow 
		\infer[(\neck)]{\Phi,\Psi \to D^\neck}{\infer[(\mathrm{cut})]{\Psi,\Phi \to D^\neck}{\Psi,\Phi \to C^\neck & C^\neck \to D^\neck}}
		$$
	\end{proof}
	As usually, this implies that the derivability problem for $\LC^\neck$ is decidable. Moreover, it can be seen that it is in NP because derivability can be justified by a derivation tree, which has polynomial size w.r.t. the size of a sequent (notice that, if the rule $(\neck)$ is applied twice in a row, then we can replace these applications by one application of $(\neck)$). Finally, one infers from the cut elimination theorem that $\LC^\neck$ is a conservative extension of $\LC$: if a sequent without $^\neck$ in its types is derivable in $\LC^\neck$, then it is derivable in $\LC$. This allows us to conclude that the derivability problem for $\LC^\neck$ is NP-complete.
	
	We can also define the Lambek calculus with the cyclic shift in a non-sequent way as is done for $\LC$ in \cite[p.163]{Lambek58}. Namely, we consider only sequents of the form $A \to B$ where $A$ and $B$ are types; the calculus contains axioms $A \to A$, $A \cdot (B \cdot C) \leftrightarrow (A \cdot B) \cdot C$, $C\to C^\neck$, and the following rules:
	$$
	\infer[]{A \cdot B \to C}{A \to C/B}
	\quad
	\infer[]{A \cdot B \to C}{B \to A \backslash C}
	\quad
	\infer[]{A \to C/B}{A\cdot B \to C}
	\quad	
	\infer[]{B \to A \backslash C}{A\cdot B \to C}
	\quad
	\infer[]{A \to C}{A \to B & B \to C}
	\quad
	\infer[]{A^\neck \to C^\neck}{A \to C^\neck}
	\quad
	\infer[]{B\cdot A \to C^\neck}{A\cdot B \to C^\neck}
	$$
	Denote the resulting calculus as $\LC^\neck_{\mathrm{H}}$. It is not hard to prove the following
	\begin{theorem}
		$\LC^\neck \vdash A_1, \dotsc, A_n \to B$ if and only if $\LC^\neck_{\mathrm{H}} \vdash A_1\cdot\dotsc\cdot A_n \to B$.
	\end{theorem}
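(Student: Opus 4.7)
The equivalence follows the familiar pattern for translating between a sequent-style calculus and a Hilbert/equational-style calculus built on the product. I would prove both directions by induction, invoking cut elimination in $\LC^\neck$ to move between the two sides. A useful preliminary observation is that, in $\LC^\neck$, the sequents $A_1,\dots,A_n \to B$ and $A_1\cdot\dots\cdot A_n \to B$ are interderivable: one direction uses $(\cdot\to)$ repeatedly, and the other uses $(\to\cdot)$ together with the fact that $A_1,\dots,A_n \to A_1\cdot\dots\cdot A_n$ is derivable (by axioms and $(\to\cdot)$) plus the (now admissible) cut.

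For the direction $\LC^\neck \Rightarrow \LC^\neck_{\mathrm{H}}$, I would induct on a (cut-free) derivation of $A_1,\dots,A_n \to B$. The Gentzen product rules $(\cdot\to),(\to\cdot)$ match $\LC^\neck_{\mathrm{H}}$-associativity together with the two directions of the residuation rules. The slash rules are handled directly by the residuation rules of $\LC^\neck_{\mathrm{H}}$. For $(\to^\neck)$, compose the induction hypothesis with the axiom $C\to C^\neck$ via transitivity. The rule $(^\neck\to)$ is literally a rule of $\LC^\neck_{\mathrm{H}}$. To simulate $(\neck)$, start from $\Pi,\Psi\to A^\neck$, translated as $X\cdot Y \to A^\neck$ with $X$ the product of $\Pi$ and $Y$ the product of $\Psi$ (reached via the associativity axioms); apply the cyclic Hilbert rule to obtain $Y\cdot X\to A^\neck$; re-associate to get the product of $\Psi,\Pi\to A^\neck$.

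For the direction $\LC^\neck_{\mathrm{H}} \Rightarrow \LC^\neck$, I would induct on a derivation in $\LC^\neck_{\mathrm{H}}$ and check that each axiom is $\LC^\neck$-derivable and each rule is $\LC^\neck$-admissible. The axiom $A\to A$ is built-in; the associativity axioms $A\cdot(B\cdot C)\leftrightarrow(A\cdot B)\cdot C$ follow from repeated applications of $(\cdot\to)$ and $(\to\cdot)$; the axiom $C\to C^\neck$ is obtained from $C\to C$ via $(\to^\neck)$. The residuation rules are classical $\LC$ derivations. Transitivity is precisely cut, which is admissible by the previous theorem. The rule $A^\neck\to C^\neck$ from $A\to C^\neck$ is $(^\neck\to)$ itself. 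Finally, given $A\cdot B\to C^\neck$, cut with the derivable sequent $A,B\to A\cdot B$ yields $A,B\to C^\neck$; apply $(\neck)$ to obtain $B,A\to C^\neck$; then $(\cdot\to)$ gives $B\cdot A\to C^\neck$.

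The main obstacle is the bookkeeping between comma-sequences and product formulas: the Gentzen rule $(\neck)$ permits cyclically swapping an arbitrary split $\Pi,\Psi$ of the antecedent, whereas the Hilbert rule only swaps the two factors of a binary product. Making these match requires systematic re-bracketing via the associativity axioms of $\LC^\neck_{\mathrm{H}}$; encapsulating this once as an auxiliary lemma (any two bracketings of $A_1\cdot\dots\cdot A_n$ are interderivable in $\LC^\neck_{\mathrm{H}}$) keeps both directions of the induction clean and confines the combinatorics to a single place.
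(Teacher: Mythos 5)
Your plan is correct: the paper itself omits the proof (stating only that it ``is not hard to prove''), and your argument is precisely the standard Lambek-style equivalence the authors intend --- simulating each Gentzen rule by residuation, transitivity, associativity and the two $\neck$-rules of $\LC^\neck_{\mathrm{H}}$, and conversely checking each Hilbert axiom and rule is admissible in $\LC^\neck$ via cut. Your explicit handling of $(\neck)$ versus the binary cyclic rule, and the auxiliary re-bracketing lemma, address the only genuinely non-routine point.
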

	
	Now, let us turn to the following issue: it can be observed that $^\neck$ occurs in the antecedent of a sequent only when there is a a type with $^\neck$ in the succeedent; the $^\neck$ connective cannot ``do'' anything substantial when being in the antecedent. Can one rid of it somehow in such positions? This question can be answered positively in the sense described below. 
	\begin{definition}\leavevmode
		\begin{enumerate}
			\item Let $A$ be a type and let $B$ be its subtype. We say that $B$ is even in $A$, if one of the following holds:
			\begin{multicols}{2}
				\begin{itemize}
					\item $A=B$;
					\item $A=C^\neck$, and $B$ is even in $C$;
					\item $A=C \cdot D$, and $B$ is even in $C$ or in $D$;
					\item $A=C \backslash D$, and $B$ is odd in $C$ or even in $D$;
					\item $A=D / C$, and $B$ is odd in $C$ or even in $D$.
				\end{itemize}
			\end{multicols}
			\item Let $A$ be a type and let $B$ be its subtype. We say that $B$ is odd in $A$, if one of the following holds:
			\begin{multicols}{2}
				\begin{itemize}
					\item $A=C^\neck$, and $B$ is odd in $C$;
					\item $A=C \cdot D$, and $B$ is odd in $C$ or in $D$;
					\item $A=C \backslash D$, and $B$ is even in $C$ or odd in $D$;
					\item $A=D / C$, and $B$ is even in $C$ or odd in $D$.
				\end{itemize}
			\end{multicols}
		\end{enumerate}
	\end{definition}
	\begin{definition}
		A type is called \emph{even-cyclic} if it does not contain odd subtypes of the form $B^\neck$. A type is called \emph{odd-cyclic} if it does not contain even subtypes of the form $B^\neck$.
	\end{definition}
	\begin{definition}
		The size of a type $A$ is the total number of connectives in $A$. The set of types of $\LC^\neck$ being of size not greater than $N$ is denoted $Tp^\neck_{\le N}$.
	\end{definition}
	\begin{theorem}\label{th_even_calculus}
		For each $N\in \mathbb{N}$ there exist functions $e_N:Tp^\neck_{\le N} \to Tp^\neck$ and $o_N:Tp^\neck_{\le N}\to Tp^\neck$ such that 
		\begin{enumerate}
			\item $e_N(T)$ is even-cyclic, and $o_N(T)$ is odd-cyclic for all $T$ from $Tp^\neck_{\le N}$;
			\item For any sequent $A_1,\dotsc, A_n \to B$ where $A_i,B\in Tp^\neck_{\le N}$ it holds that $\LC^\neck \vdash A_1, \dotsc, A_n \to B$ if and only if $\LC^\neck \vdash o_N(A_1),\dotsc, o_N(A_n) \to e_N(B)$.
		\end{enumerate}
	\end{theorem}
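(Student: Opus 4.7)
The plan is to define $e_N$ and $o_N$ by simultaneous structural recursion on types. For primitive types and the binary connectives the definitions are forced by polarity:
\begin{align*}
e_N(p) &= o_N(p) = p, \\
e_N(A\cdot B) &= e_N(A)\cdot e_N(B), & o_N(A\cdot B) &= o_N(A)\cdot o_N(B), \\
e_N(A\backslash B) &= o_N(A)\backslash e_N(B), & o_N(A\backslash B) &= e_N(A)\backslash o_N(B),
\end{align*}
symmetrically for $/$, and $e_N(A^\neck)=e_N(A)^\neck$. A straightforward induction on $A$ shows that each right-hand side respects the required cyclicity condition.

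The only nonroutine case is $o_N(A^\neck)$: by definition an odd-cyclic type cannot start with $^\neck$, since the whole type is always even in itself. My plan is to hide the offending $^\neck$ at an odd position by setting
\[
o_N(A^\neck) \eqdef (e_N(A)^\neck \backslash o_N(A))\cdot o_N(A),
\]
so that the $^\neck$ appears only to the left of a $\backslash$ and is therefore odd in the enclosing type. Odd-cyclicity of $o_N(A^\neck)$ needs a short verification: $^\neck$-subtypes inherited from $e_N(A)$ are even in $e_N(A)$ (because $e_N(A)$ is even-cyclic) and the polarity flip of being on the left of $\backslash$ turns them into odd subtypes of $o_N(A^\neck)$; the two copies of $o_N(A)$ contribute only odd-cyclic material by the inductive hypothesis.

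For the iff I would first establish by induction on $A$ that $\LC^\neck\vdash o_N(A)\to A$ and $\LC^\neck\vdash A\to e_N(A)$ for every $A\in Tp^\neck_{\le N}$. In the $^\neck$-case the derivation of $o_N(A^\neck)\to A^\neck$ is assembled from the inductive $o_N(A)\to A^\neck$ (applying $(\to^\neck)$ to $o_N(A)\to A$) and $o_N(A)\to e_N(A)^\neck$ (applying $(\to^\neck)$ to $o_N(A)\to e_N(A)$) via $(\backslash\to)$, then $(\neck)$, then $(\cdot\to)$. Combined with cut-admissibility (the preceding theorem), these lemmas yield the ``only if'' direction of the iff: chained cuts with the $o_N(A_i)\to A_i$ and $B\to e_N(B)$ sequents take any derivation of $A_1,\dots,A_n\to B$ to one of its transform.

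The converse ``if'' direction is the hard part, because the symmetric cuts are unavailable: $A\to o_N(A)$ is generally not derivable (for instance $p^\neck\to(p^\neck\backslash p)\cdot p$ has no proof, since none of the rules can manipulate a $^\neck$-formula on the left against a product-succedent). I would therefore argue by induction on a cut-free derivation of $o_N(A_1),\dots,o_N(A_n)\to e_N(B)$. The key observation, anticipated in the paragraph preceding the theorem, is that the guard $(e_N(A)^\neck\backslash o_N(A))\cdot o_N(A)$ can be actively decomposed only when the succedent is eventually of $^\neck$-shape: the $(\backslash\to)$-step on the guard needs $e_N(A)^\neck$ on its right-hand premise, which in turn forces a $(\neck)$-step on the continuation, and $(\neck)$ is applicable only to $^\neck$-succedents. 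This mirrors precisely the condition under which $A^\neck$ could have been used principally on the left in the original via $(^\neck\to)$, so a case analysis on the last rule in the cut-free derivation translates each step back into a rule of $\LC^\neck$ acting either on $A^\neck$ or on the untouched parts of the sequent. The main obstacle is making this mirroring precise: one has to check that every pattern of guard-activation in the transformed derivation corresponds to a legitimate use of $(^\neck\to)$, $(\neck)$ or $(\to^\neck)$ in the original, and in particular that no rule application on the auxiliary $\backslash$ and $\cdot$ in the guard produces a derivation that would be impossible without them.
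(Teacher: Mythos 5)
Your definitions of $e_N$ and $o_N$ on primitive types and on $\cdot,\backslash,/$ agree in spirit with the paper's, and your ``only if'' direction (via the cut-admissible lemmas $o_N(A)\to A$ and $A\to e_N(A)$) goes through. The construction fails, however, in the ``if'' direction, and not merely at the level of missing details: the guard $o_N(A^\neck)=(e_N(A)^\neck\backslash o_N(A))\cdot o_N(A)$ does not actually enforce anything. Your argument rests on the claim that discharging the guard ``needs $e_N(A)^\neck$ on its right-hand premise, which in turn forces a $(\neck)$-step''; but the premise $\Pi\to e_N(A)^\neck$ of $(\backslash\to)$ is obtainable from $\Pi\to e_N(A)$ by a single $(\to^\neck)$, with no rotation anywhere, so the guard will happily consume \emph{any} material deriving $e_N(A)$ --- in particular a bare companion copy of $o_N(A)$, its own or another guard's. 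Concretely, take the sequent $p^\neck,p^\neck\to(p\cdot p)^\neck$. It is not derivable in $\LC^\neck$ (cut-free proof search dies at $p^\neck\to p$; semantically, with $w(p)=\{ab\}$ the word $abba$ lies in $w(p^\neck)\cdot w(p^\neck)$ but not in $w((p\cdot p)^\neck)$). Yet its transform $(p^\neck\backslash p)\cdot p,\;(p^\neck\backslash p)\cdot p\to(p\cdot p)^\neck$ \emph{is} derivable: unfold both products by $(\cdot\to)$, rotate once by $(\neck)$ to get $p,\,p^\neck\backslash p,\,p,\,p^\neck\backslash p\to(p\cdot p)^\neck$, apply $(\backslash\to)$ twice, each time letting a guard consume the adjacent bare $p$ via the premise $p\to p^\neck$ (an instance of $(\to^\neck)$ over an axiom), and finish with $p,p\to(p\cdot p)^\neck$. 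So your $o_N,e_N$ violate clause~2 of the theorem.

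The paper's construction is built precisely to block this crosstalk. It sets $A^\Box=(l\backslash((l\cdot A)\cdot r))/r$ with $l,r$ \emph{fresh} primitive types occurring nowhere else, replaces odd occurrences of $B^\neck$ by $B^\Box$ and even ones by the $N$-fold alternation $B^{(\Box\neck)^N}$, and then analyses cut-free derivations in an auxiliary calculus $\LC^\neck_2$ with ``doubled'' rules. Freshness gives the lemma that $\Pi\to l$ forces $\Pi=l$, so a $\Box$ in the antecedent can only be discharged when flanked by the unique $l$ and $r$ emitted by unfolding a $\Box$ of the succedent; this pins each antecedent guard to the succedent rather than to a sibling, and is exactly the adjacency/uniqueness control your encoding lacks. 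The $N$-fold iteration in the principal positions is also load-bearing (each simulated $(^\neck\to)$ consumes one $\Box$-level), which is why the theorem is stated with functions depending on $N$; an $N$-independent encoding such as yours should already have raised suspicion. If you want to salvage your approach, you would at minimum need fresh synchronising atoms on both sides of the guarded type and an argument replaying the paper's classification of the sequents that can occur in a cut-free derivation of the transformed sequent.
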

	Therefore, we can rid of $^\neck$ in antecedents of sequents but an elimination procedure will depend on the maximal size of types in a sequent. The proof of this theorem is given in Appendix \ref{app_proof_th_even_calculus}; here we will only introduce $e_N$ and $o_N$ functions. Let $l$, $r$ be two primitive types not occuring in $S$ or in the dictionary of $Gr$. Define $A^\Box \eqdef (l \backslash ((l\cdot A)\cdot r))/r$.
		\begin{definition}
			The function $e_N(A)$ replaces each even subtype of $A$ of the form $B^\neck$ by the type $B^{(\Box\neck)^N}\eqdef B^{\Box\neck\dotsc\Box\neck}$ where $^\Box$ and $^\neck$ are repeated $N$ times; it also replaces each odd subtype of $A$ of the form $B^\neck$ by the type $B^\Box$. The function $o_N(A)$ replaces each odd subtype of $A$ of the form $B^\neck$ by $B^{(\Box\neck)^N}$, and each even subtype of such a form by $B^\Box$.
		\end{definition}
		It is not hard to see that $e_N(A)$ is even-cyclic and that $o_N(A)$ is odd-cyclic. In Appendix \ref{app_proof_th_even_calculus} we prove that for $A_1,\dotsc,A_n,B$ belonging to $Tp^\neck_{\le N}$ $\LC^\neck \vdash A_1, \dotsc, A_n\to B \Leftrightarrow \LC^\neck \vdash o_N(A_1), \dotsc, o_N(A_n)\to e_N(B)$. This completes the proof of Theorem \ref{th_even_calculus}.
	
	Finally, we would like to prove the following
	\begin{theorem}
		Let $\LC^\neck\vdash A \to B$ and let for all $\Gamma$ and $\Delta$, if $\LC^\neck \vdash \Gamma,\Delta \to B$, then $\LC^\neck \vdash \Delta,\Gamma \to B$. Then $\LC^\neck \vdash A^\neck \to B$.
	\end{theorem}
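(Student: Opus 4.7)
My plan is to reduce the claim to an auxiliary statement about the succedent, and then invoke cut. Starting from $\LC^\neck \vdash A \to B$, I first apply $(\to^\neck)$ to obtain $A \to B^\neck$, and then $(^\neck\to)$ to obtain $A^\neck \to B^\neck$. Since cut is admissible in $\LC^\neck$ by the preceding cut-elimination theorem, it then suffices to establish $\LC^\neck \vdash B^\neck \to B$ from the cyclic-invariance hypothesis on $B$; one cut with $A^\neck \to B^\neck$ will yield the desired $A^\neck \to B$.

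To establish $B^\neck \to B$ under the cyclic-invariance of $B$, my plan is to exploit the hypothesis together with the logical and structural rules of $\LC^\neck$. The easy base case is $B = D^\neck$: here $B^\neck \to B$ reads $D^{\neck\neck} \to D^\neck$, which is immediate by $(^\neck\to)$ applied to the axiom $D^\neck \to D^\neck$. For a general cyclic-invariant $B$, I will proceed by induction on the structure of $B$. At each inductive step I invoke the invertibility of the right-rule introducing the outermost connective of $B$ (for instance, $(\to\backslash)$ when $B = B_1 \backslash B_2$) to reduce the goal to a sequent with a more complex antecedent, and then apply the hypothesis to cyclically shift that antecedent. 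Auxiliary sequents such as $B, B\backslash B \to B$, derivable by $(\backslash\to)$ from the axiom $B\to B$, together with their cyclic shifts given by the hypothesis, can then be combined with $(\cdot\to)$, $(\neck)$, and $(^\neck\to)$ to build up the required derivation.

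The hard part, I expect, will be this inductive step. The cyclic-invariance hypothesis on $B$ does not directly propagate to its subformulas: if $B = B_1 \backslash B_2$ is cyclic-invariant, then the invertibility of $(\to\backslash)$ only tells us that sequents of the form $B_1, \Gamma, \Delta \to B_2$ can be cyclically shifted while the prefix $B_1$ is kept fixed, which is strictly weaker than cyclic-invariance of $B_2$ itself. Handling this asymmetry will require carefully tracking which positions of the antecedent are fixed throughout the induction, and most likely strengthening the inductive hypothesis so that it speaks about sequents of the form $\Pi, \Sigma \to B'$ in which $\Pi$ is a fixed prefix and only $\Sigma$ is subject to cyclic shifts, with the full theorem recovered by specialising $\Pi$ to the empty sequence.
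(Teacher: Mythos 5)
Your opening reduction is sound: from $A\to B$ the rules $(\to^\neck)$ and $(^\neck\to)$ give $A^\neck\to B^\neck$, so by cut-admissibility the theorem would follow from $\LC^\neck\vdash B^\neck\to B$. But note that this buys nothing: $B^\neck\to B$ is exactly the instance $A=B$ of the theorem, so the entire difficulty is still ahead of you, and it is in the second half that the proposal has a genuine gap. The property ``$B$ is cyclically invariant'' is not a structural property of $B$ --- it quantifies over \emph{all} derivable sequents with succedent $B$ --- so a structural induction on $B$ has no usable induction hypothesis to hand down to the subformulas. As you yourself observe, invertibility of $(\to\backslash)$ only yields cyclic invariance of $B_2$ \emph{relative to a fixed prefix} $B_1$, and you neither state precisely what the strengthened inductive claim should be nor show that this weaker invariance suffices to prove it; the ``auxiliary sequents'' you mention ($B,B\backslash B\to B$ and its shifts) do not address this. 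The crux you are missing is the following: in a cut-free derivation, the only rule that can place a type $B^\neck$ into an antecedent is $(^\neck\to)$, whose conclusion is a sequent $B^\neck\to D^\neck$ with a \emph{singleton} antecedent and a $^\neck$-succedent. So any proof of $B^\neck\to B$ must locate, from the invariance hypothesis alone, a suitable occurrence $D^\neck$ inside $B$ such that $B\to D^\neck$ is derivable and the remainder of $B$ can be rebuilt around it. Nothing in your plan extracts such a $D^\neck$.

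The paper's proof is non-inductive and is built precisely to extract that occurrence. It forms probe types $T_0=A$, $T_n=q_n\cdot(T_{n-1}/q_n)$ with fresh primitive types $q_n$, uses the invariance hypothesis $N$ times (where $N$ is the size of $B$) to show that $T_N\to B$ is derivable, and then inspects a cut-free derivation of $T_N\to B$. Each swap of $q_i$ past $T_{i-1}/q_i$ forces an application of $(\neck)$, hence a succedent of the form $C^\neck$ at that point; a pigeonhole count along the chain of subtypes $B_1\preceq B_1'\preceq\dotsb\preceq B$ shows that for some $i_0$ the swap happens with both side contexts empty, i.e.\ at a derivable sequent $T_{i_0-1}\to C^\neck$, which is exactly the configuration in which $(^\neck\to)$ can be inserted. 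The derivation is then rewritten with $A$ and $A^\neck$ substituted for the probes, yielding $A^\neck\to B$. To complete your argument you would need an idea of comparable strength to close the inductive step; as written, the proposal is a reduction plus an acknowledged open plan rather than a proof.
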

	It implies that, if we consider $(Tp^\neck/\equiv,\to,\cdot)$ (where $A\equiv B$ if $\LC^\neck \vdash A \to B$ and $\LC^\neck \vdash B \to A$) as an ordered semigroup $(M,\le, \cdot)$, then $$a^\neck=\inf\{b \mid (a \le b) \wedge \forall g \forall d (g\cdot d \le b \Rightarrow d \cdot g \le b)\}.$$
	\begin{proof}
		Let us fix $A$ and $B$ and define types $T_n(A)$ (denoted also by $T_n$) as follows:
		\begin{enumerate}
			\item $T_0(A)\eqdef A$;
			\item $T_{n}(A)\eqdef q_n \cdot (T_{n-1}(A)/q_n)$.
		\end{enumerate}
		Here $q_1,q_2,\dotsc$ are primitive types that do not occur neither in $A$ nor in $B$. We claim that $\LC^\neck \vdash T_n \to B$ for all $n$. This is proved by induction on $n$. The case $n=0$ follows from the statement of the theorem. The induction step is proved as follows:
		$$
		\infer[(\cdot\to)]{
			T_{n} \to B
		}{
			\infer[]{
				q_n, (T_{n-1}/q_n) \to B
			}{
				\infer[(/\to)]
				{
					T_{n-1}/q_n,q_n \to B
				}
				{
					T_{n-1} \to B & q_n \to q_n
				}
			}
		}
		$$
		More precisely, we derive the sequent $T_{n-1}/q_n,q_n \to B$ and then use the statement of the theorem, obtaining that $q_n,T_{n-1}/q_n \to B$ is also derivable.
		
		Let $N$ be the size of $B$. Consider a cut-free derivation of $T_N \to B$. There must appear sequents of the form $\Gamma, T_{i-1}/q_i, q_i, \Delta \to B$ and $\Gamma^\prime, q_i,T_{i-1}/q_i, \Delta^\prime \to B^\prime$. Let $\Gamma_i, T_{i-1}/q_i, q_i, \Delta_i \to B_i$ be the first sequent of the first form occurring in the derivation from top to bottom, and let $\Gamma_i^\prime, q_i,T_{i-1}/q_i, \Delta_i^\prime \to B_i^\prime$ be the last sequent of the second form appearing in the derivation from bottom to top. This means that there exist the following steps in the derivation:
		\begin{equation}\label{eq_der}
			\infer[(\cdot\to)]
			{
				\Gamma_i^\prime, T_{i}, \Delta_i^\prime \to B_i^\prime
			}
			{
				\infer[]
				{
					\Gamma_i^\prime, q_i,T_{i-1}/q_i, \Delta_i^\prime \to B_i^\prime
				}
				{
					\infer[]
					{
						\dotsc
					}
					{
						\infer[(/\to)]{
							\Gamma_i, T_{i-1}/q_i, q_i, \Delta_i \to B_i
						}{
							\Gamma_i, T_{i-1}, \Delta_i \to B_i
						}
					}
				}
			}
		\end{equation}
		Given some types $C$ and $D$, if $C$ is a subtype of $D$, let us denote this by $C \preceq D$. It holds that $$B_1 \preceq B_1^\prime \preceq B_2 \preceq B_2^\prime \preceq \dotsc \preceq B_N \preceq B_N^\prime\preceq B.$$
		If for all $i=1,\dotsc, N$ the size of $B_i$ is strictly less than the size of $B_i^\prime$, then the size of $B_N^\prime$ is at least $N+1$, which is greater than the size of $B$. This contradicts $B_N^\prime \preceq B$. Hence there exists such $i=i_0$ that $B_{i_0}=B_{i_0}^\prime$. This implies that the part of the derivation (\ref{eq_der}) for $i=i_0$ denoted by ``$\dotsc$'' does not affect the succeedent of the sequent. 
		
		Notice that $T_{i_0-1}/q_i$ and $q_{i_0}$ changed their order; hence $B_{i_0}=C^\neck$ for some $C$, and the ``$\dotsc$'' part must contain an application of $(\neck)$. What can also be contained in it? There can be rules affecting $\Gamma_{i_0}$ and $\Delta_{i_0}$; however, these sequents of types cannot be completely eliminated, \emph{if at least one of them is not empty}. Therefore, if we write types of each antecedent in this derivation in a circle and consider an arc between $T_{i_0-1}/q_{i_0}$ and $q_{i_0}$ going from the first type to the second one clockwise, there always are some types between them (the only way for them to disappear is by applying one of rules $(\to/)$ or $(\to \backslash)$, but they are not applied in this part of the derivation). However, if we go clockwise from $T_{i_0-1}/q_{i_0}$ to $q_{i_0}$ in the sequence $\Gamma_{i_0}^\prime, q_{i_0},T_{i_0-1}/q_{i_0}, \Delta_{i_0}^\prime$, then we find no other types between them. The only way for this to be possible is that $\Gamma_{i_0}$ and $\Delta_{i_0}$ are empty from the beginning. This means that the sequent $T_{i_0-1}\to C^\neck$ is derivable.
		
		Now, we return to considering the derivation of $T_N\to B$ and we do the following: 
		\begin{itemize}
			\item we remove each type of the form $q_i$;
			\item we replace each type of the form $T_{i-1}/q_i$ or $T_{i-1}$ by the type $A$ for $i=1,\dotsc,i_0-2$;
			\item we replace each type of the form $T_{i-1}/q_i$ or $T_{i-1}$ by the type $A^\neck$ for $i=i_0-1,\dotsc,N$;
			\item we add the following rule application to the place of the original derivation where $T_{i_0-1}\to C^\neck$ appears:
			$$
			\infer[(\to ^\neck)]{A^\neck \to C^\neck}{A \to C^\neck}
			$$
		\end{itemize}
		It is not hard to check that the new derivation is correct. Finally, observe that the resulting sequent is $A^\neck \to B$.
	\end{proof}
	
	
	\section{Recognizing Power of Grammars Based on $\LC^\neck$}\label{sec_grammars}
	One of standard issues regarding variants of the Lambek calculus is recognizing power of their categorial grammars. Let us recall the main definitions.
	\begin{definition}\label{LG}
		Given a calculus $\mathcal{K}$ including the set of types $\mathcal{T}$, a $\mathcal{K}$-grammar is a triple $\langle\Sigma, T, \triangleright\rangle$ where $\Sigma$ is a finite alphabet, $T\in \mathcal{T}$ is a distinguished type, and  $\triangleright\subseteq\Sigma\!\times\! \mathcal{T}$ is a finite binary relation. The set $\{T\in \mathcal{T} \mid \exists \, a: a\triangleright T\}$ is called \emph{the toolbox} of this grammar.
	\end{definition}
	\begin{definition}
		The language $L(G)$ recognized by a $\mathcal{K}$-grammar $G=\langle\Sigma, T, \triangleright\rangle$ is the following set of strings: $L(G)\eqdef \lbrace a_{1}\dots a_{n}\in\Sigma^{+}|\ \exists\, T_{1},\dots,T_{n}\in \mathcal{T}:\forall i = 1, \dots, n\  a_{i}\,\triangleright\, T_{i}, \mathcal{K} \vdash T_{1}, \dots, T_{n} \to T \rbrace$.
	\end{definition}
	If $\mathcal{K}=\LC$, then such grammars are called Lambek grammars (since they are based on the Lambek calculus). The most important result regarding them is that they generate the class of context-free languages without the empty word (see \cite{Pentus93}). For $\mathcal{K}=\LC^\rev$ the same statement holds (this is proved in the PhD thesis of Stepan L. Kuznetsov). 
	
	In this section, our objective is to consider $\LC^\neck$-grammars and to study their recognizing power. Since $\LC^\neck$ is a conservative extension of $\LC$, $\LC^\neck$-grammars can recognize all context-free languages (without the empty word). Do they recognize only context-free languages or not? As we mentioned in Section \ref{sec_intr}, context-free languages are closed under the cyclic shift operation, and at the first glance this is an evidence in favor of the claim that $\LC^\neck$-grammars recognize only context-free languages. However, it is not hard to prove the following
	\begin{theorem}\label{th_perm}
		$\LC^\neck$-grammars recognize all permutations of regular languages.
	\end{theorem}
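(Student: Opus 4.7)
I would construct, for each regular $R \subseteq \Sigma^+$, an $\LC^\neck$-grammar recognising $\mathrm{perm}(R)$. Fix a DFA $M = (Q,\Sigma,\delta,q_0,\{q_f\})$ for $R$ (the single-accepting-state assumption being harmless). The natural first attempt is the classical Lambek encoding: take fresh primitives $\{p_q\}_{q\in Q}$, put $a \triangleright p_q\backslash p_{q'}$ for every transition $(q,a,q') \in \delta$, and use target $p_{q_0}\backslash p_{q_f}$; this recognises $R$ already in the pure $\LC$ fragment. Replacing the target by the cyclic wrapper $(p_{q_0}\backslash p_{q_f})^\neck$ immediately upgrades recognition, via a single application of $(\neck)$ to the underlying $\LC$-derivation, to every cyclic shift of every $w \in R$.

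To reach arbitrary (non-cyclic) permutations I would combine two observations. First, $(\neck)$ can be applied at arbitrary depth inside a derivation: any subderivation concluding with a succeedent of the form $A^\neck$ admits rotation of its sub-antecedent, and composing such partial rotations realises any element of the full symmetric group on the letters of a fixed-length sequence. Second, to cope with unbounded word length in an infinite $R$, I would appeal to Parikh's theorem and decompose $\mathrm{perm}(R)$ as a finite union of commutative linear languages $\{w:\mathrm{Parikh}(w)\in v+\sum_j\mathbb{N}u_j\}$, and then build the grammar around the composable ``identity'' type $p\backslash p$. Any positive number of copies $p\backslash p,\dots,p\backslash p \to p\backslash p$ is $\LC$-derivable, so one can design letter types in such a way that each block realising the base vector $v$ or any period $u_j$ reduces in $\LC$ to $p\backslash p$; with target $(p\backslash p)^\neck$, the rule $(\neck)$ then allows these blocks to be interleaved arbitrarily along the antecedent. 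The finite union over components of the semilinear decomposition is absorbed by assigning each letter the union of its component-types, so that a successful derivation effectively selects one component.

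The main obstacle is completeness: given a word $w$ whose Parikh vector lies in the prescribed semilinear set, one must exhibit a concrete derivation, alternating $\LC$ reductions of adjacent type pairs with $(\neck)$-rotations of the yet-unreduced portion of the antecedent, so that every rotation places a reducible pair into adjacency. This scheduling is delicate because $(\neck)$ rotates the \emph{whole} current antecedent, not a sub-block, so the ordering in which $\LC$ reductions happen constrains which rotations are still useful. Soundness ($L(G) \subseteq \mathrm{perm}(R)$) I would settle via the formal language semantics introduced at the start of Section~\ref{sec_Lneck}: for each semilinear component, choose an interpretation of the primitives under which $w(p\backslash p)$ coincides with the language of strings with Parikh vector in that component (for instance, in the $\{|w|_a=|w|_b\}$ case one takes $w(p) = \{w : |w|_a - |w|_b = 1\}$, whence $w(p\backslash p) = \{|w|_a = |w|_b\}$), and then invoke soundness of $\LC^\neck$ to confine the recognised language to $\mathrm{perm}(R)$.
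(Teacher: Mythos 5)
Your first observation (that nested applications of $(\neck)$ at different depths of a derivation can realize arbitrary permutations) is the right one, but the construction you then build does not implement it, and this is where the argument breaks. In your grammar the only occurrence of $^\neck$ is on the distinguished type $(p\backslash p)^\neck$: the letter types are pure $\LC$ types, chosen so that blocks reduce to $p\backslash p$ \emph{in $\LC$}. But $(\neck)$ is only applicable to a sequent whose succeedent has outermost $^\neck$. Reading the derivation bottom-up, once $(\to^\neck)$ strips the outer $^\neck$, or once a left rule splits off a premise $\Pi\to A$ whose succeedent $A$ is a $\neck$-free argument of a division, no further rotation is available in that part of the derivation. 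Hence the only rotations you ever get act on the whole antecedent of the main branch, and each block must be assembled from a \emph{contiguous} factor of some cyclic shift of the input. This recognizes only the cyclic closure of the concatenation-of-blocks language, which is strictly smaller than $\textsc{perm}(R)$: for $R=\{abc\}^+$ the word $aabbcc$ lies in $\textsc{perm}(R)$, yet no cyclic shift of $aabbcc$ contains a contiguous factor with Parikh vector $(1,1,1)$, so your reduction cannot even begin. The ``scheduling'' difficulty you flag at the end is therefore not a loose end but the precise point of failure. (The union-over-components trick and the realizability of an arbitrary linear set as $w(p\backslash p)$ --- which would have to be a submonoid --- are further gaps, but secondary ones.)

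The repair, and the route the paper takes, is to put $^\neck$ on the \emph{argument positions of the letter types} rather than only on the target, so that after every reduction step the remaining sequent again has a rotatable succeedent. Concretely, the paper starts from a right-linear encoding with types $p/q$ ($q$ the next state) and primitive target $s$, proves Theorem~\ref{th_perm_cs} for the calculus $\LC+(\CS)$ in which cyclic shift is an unconditional structural rule --- completeness by induction on $|w|$, plucking off one letter at a time (rotate, apply $(/\to)$, recurse), and the converse inclusion syntactically via Lemma~\ref{lemma_perm_simple} rather than through the formal-language semantics --- and then transports the grammar into $\LC^\neck$ by the translation of Theorem~\ref{th_cs_to_neck}, which sends $p/q$ to $p/q^\neck$ and $s$ to $s^\neck$, so that every intermediate succeedent licenses $(\neck)$. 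No appeal to Parikh's theorem is needed. Your semantic argument for soundness is viable in principle (soundness, unlike completeness, of $\LC^\neck$ for the language semantics is unproblematic), but it is not needed once the syntactic lemma is available.
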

	A permutation of a language $L$ is the language $\textsc{perm}(L)\eqdef \{a_{\sigma(1)}\dotsc a_{\sigma(n)}\mid a_1\dotsc a_n\in L, \sigma\in S_n\}$. It is known that some permutations of regular languages are not context free, e.g. the language $\textsc{perm}(\{abc\}^+)$, and hence $\LC^\neck$-grammars appear to be more powerful than $\LC$-grammars.
	
	We would like to prove Theorem \ref{th_perm} in a bit more general way than just directly presenting construction for a grammar recognizing permutation of a given regular language. Namely, we would like to consider another calculus $\LC+(\CS)$, which is obtained from the Lambek calculus by adding the following structural rule:
	$$
	\infer[(\CS)]{\Psi,\Pi\to A}{\Pi,\Psi\to A}
	$$
	This rule is close to $(\neck)$ but the difference is that cyclic shift in the latter rule is performed in a controlled way (only if a succeedent is of the form $A^\neck$). Now, we aim to prove two facts.
	\begin{theorem}\label{th_cs_to_neck}
		$\LC+(\CS)$ can be embedded in $\LC^\neck$: there are functions $\mathcal{A}$ and $\mathcal{S}$ from types of the former calculus to those of the latter such that $\LC+(\CS) \vdash \Pi \to A$ if and only if $\LC^\neck \vdash \mathcal{A}(\Pi) \to \mathcal{S}(A)$.
	\end{theorem}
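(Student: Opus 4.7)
The plan is to design translations $\mathcal{A},\mathcal{S}\colon Tp\to Tp^\neck$ meeting two invariants: (i) every $\mathcal{S}(A)$ has $^\neck$ as its outermost connective, so that $(\neck)$ is applicable to every translated sequent and can simulate $(\CS)$; (ii) all Lambek rules of $\LC+(\CS)$ lift rule-by-rule to $\LC^\neck$. Concretely, I would set $\mathcal{A}(p)\eqdef p$, $\mathcal{S}(p)\eqdef p^\neck$, and extend by polarity: $\mathcal{A}(A\cdot B)\eqdef \mathcal{A}(A)\cdot\mathcal{A}(B)$, $\mathcal{S}(A\cdot B)\eqdef (\mathcal{S}(A)\cdot\mathcal{S}(B))^\neck$, $\mathcal{A}(A\backslash B)\eqdef \mathcal{S}(A)\backslash\mathcal{A}(B)$, $\mathcal{S}(A\backslash B)\eqdef (\mathcal{A}(A)\backslash\mathcal{S}(B))^\neck$, and symmetrically for $/$. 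A first induction on $A$, using $(\to^\neck)$ at each step and invoking the IH on subtypes, establishes $\LC^\neck\vdash \mathcal{A}(A)\to\mathcal{S}(A)$ and hence the axiom case.

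For the forward direction I would induct on an $\LC+(\CS)$-derivation. Each Lambek left rule translates verbatim, because the choice of $\mathcal{S}$ on the negative subtype in $\mathcal{A}(A\backslash B)$ and $\mathcal{A}(A/B)$ matches exactly the type of the minor premise produced by the IH. Each Lambek right rule translates to the corresponding right rule in $\LC^\neck$, followed by one $(\to^\neck)$ that restores the outer $^\neck$ of the $\mathcal{S}$-image: for instance, for $(\to\cdot)$ the IH gives $\mathcal{A}(\Pi)\to\mathcal{S}(A)$ and $\mathcal{A}(\Psi)\to\mathcal{S}(B)$, and $(\to\cdot)$ followed by $(\to^\neck)$ yields $\mathcal{A}(\Pi),\mathcal{A}(\Psi)\to (\mathcal{S}(A)\cdot\mathcal{S}(B))^\neck=\mathcal{S}(A\cdot B)$. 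Finally, $(\CS)$ becomes $(\neck)$, which is legal precisely because the succeedent is $\mathcal{S}(A)$ and therefore of the form $C^\neck$ by invariant (i).

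For the converse I avoid unwinding $\LC^\neck$-proofs step by step and instead define an inverse translation on types. Let $\tau\colon Tp^\neck\to Tp$ commute with $\backslash,/,\cdot$ and erase the cyclic shift, $\tau(A^\neck)\eqdef\tau(A)$. A rule-by-rule check shows that $\tau$ applied componentwise carries every $\LC^\neck$-derivation to an $\LC+(\CS)$-derivation: the Lambek rules translate trivially, both $(\to^\neck)$ and $(^\neck\to)$ become identity transformations of the sequent under $\tau$ (so the corresponding step is simply deleted), and $(\neck)$ is exactly $(\CS)$. A straightforward simultaneous induction on $A$ gives $\tau(\mathcal{A}(A))=\tau(\mathcal{S}(A))=A$ for every $A\in Tp$, so the hypothesis $\LC^\neck\vdash\mathcal{A}(\Pi)\to\mathcal{S}(A)$ is transported by $\tau$ directly to $\LC+(\CS)\vdash\Pi\to A$. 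The main subtlety I anticipate lies in pinning down the coupled, polarity-mirrored definitions of $\mathcal{A}$ and $\mathcal{S}$ above: the forward simulation of $(\to\cdot)$ needs the outer $^\neck$ on $\mathcal{S}(A\cdot B)$, for otherwise invariant (i) breaks and $(\neck)$ would not be usable to simulate $(\CS)$, and symmetrically for the implication connectives.
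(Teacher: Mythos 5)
Your proposal is correct and follows essentially the same route as the paper: the translations $\mathcal{A}$, $\mathcal{S}$ are defined identically, the forward direction is the same rule-by-rule simulation inserting $(\to^\neck)$ after each right rule and replacing $(\CS)$ by $(\neck)$, and your erasure map $\tau$ is just an explicit formalization of the paper's converse step of deleting all $^\neck$ occurrences and the corresponding rule applications.
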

	\begin{theorem}\label{th_perm_cs}
		$(\LC+(\CS))$-grammars recognize all permutations of regular languages.
	\end{theorem}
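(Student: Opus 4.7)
My plan is to exhibit, for each regular language $L\subseteq\Sigma^*$, an $(\LC+(\CS))$-grammar $G$ with $L(G)=\textsc{perm}(L)$. I would start from a finite automaton for $L$ and assign to each letter $a\in\Sigma$ a family of types encoding the state-transitions it may realize, together with a distinguished type engineered so that derivations of words in $L$ unfold via a product-rich, branching tree shape rather than a flat spine. This branching is what the structural rule $(\CS)$ will exploit.

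For the inclusion $\textsc{perm}(L)\subseteq L(G)$, consider $w'=a_{\pi(1)}\cdots a_{\pi(n)}$, a permutation of some $w=a_1\cdots a_n\in L$. I would first exhibit a derivation of the sequent corresponding to $w$ in $\LC$ using the designed grammar, and then transform it into a derivation of the sequent corresponding to $w'$ by inserting applications of $(\CS)$ at suitable internal nodes of the derivation tree. The technical lemma I plan to prove is that, when the derivation tree has sufficient branching, the $(\CS)$ moves available at the various subderivations compose to realize any permutation of the leaves. I would prove this by induction on the depth of the tree, using that $(\CS)$ on a subderivation spanning $m$ consecutive leaves realizes any cyclic rotation of that block, and that nested cyclic rotations of arbitrary sub-blocks together generate the full symmetric group $S_n$.

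For the converse $L(G)\subseteq\textsc{perm}(L)$, I would reason by induction on a cut-free derivation in $\LC+(\CS)$ of $T_1,\dots,T_n\to T$, tracking how each application of $(\CS)$ permutes the antecedent, to extract a derivation in $\LC$ of some permuted sequent $T_{\sigma(1)},\dots,T_{\sigma(n)}\to T$. Since the underlying $\LC$-grammar recognizes exactly $L$, this shows that the input word is a permutation of some word in $L$. Cut elimination for $\LC+(\CS)$, proved analogously to the theorem of Section~\ref{sec_Lneck}, underlies the inductive analysis.

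The main obstacle will be designing the type assignments so that derivation trees have enough branching: naively reusing any $\LC$-grammar for $L$ fails, because $(\CS)$ on a single sequent only produces cyclic rotations, not the full symmetric group. The correct construction must ensure that each word of $L$ admits a derivation whose product structure provides $(\CS)$-handles at multiple nested levels whose composition realizes all required permutations; getting the types and the distinguished type right so that every accepting run of the automaton yields such a branching derivation is the crux of the argument.
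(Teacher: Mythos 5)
Your plan is built on a premise that is false, and this sends the whole construction in the wrong direction. You claim that reusing a flat $\LC$-grammar for $L$ ``fails, because $(\CS)$ on a single sequent only produces cyclic rotations, not the full symmetric group,'' and you therefore set out to design a product-rich, branching grammar. But $(\CS)$ is not applied once at the root of the derivation: it is available at \emph{every} sequent, and a derivation over a right-linear grammar (all dictionary types of the form $p/q$ or $p$) already contains $n$ nested sub-sequents, one per letter peeled off by $(/\to)$. Composing one cyclic rotation per level realizes every permutation: given (by induction on the length of the word) a derivation of any arrangement of the remaining $n-1$ types, rotate that antecedent so the desired insertion point comes to the front, attach $q/r$ by $(/\to)$, and rotate back. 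This is exactly what the paper does: it takes the standard grammar with types $p/q$ and $p$ \emph{unchanged}, proves $\textsc{perm}(L)\subseteq L'$ by this induction with two $(\CS)$ applications per letter, and proves $L'\subseteq\textsc{perm}(L)$ via a structural lemma (Lemma \ref{lemma_perm_simple}) saying that any derivable sequent of such flat types can be permuted into a chain derivable in plain $\LC$.

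Worse, the key lemma you would need is false for the branching trees you propose. For a balanced binary product tree on four leaves, the available moves are rotations (i.e.\ swaps) inside the blocks $\{1,2\}$ and $\{3,4\}$ followed by a cyclic rotation of the concatenation; in every resulting arrangement the leaves $1$ and $2$ remain cyclically adjacent, so the arrangement $1,3,2,4$ is unreachable and the generated group is a proper subgroup of $S_4$. Nested rotations of sub-blocks generate the full symmetric group precisely in the degenerate case of nested \emph{suffix} blocks --- the flat spine you dismiss --- and fail once the tree genuinely branches. Finally, your converse direction is also substantially harder for a product-rich grammar than you acknowledge: after undoing the $(\CS)$ permutations, rules such as $(\to\cdot)$ and $(\to/)$ need their antecedent pieces to be contiguous and correctly positioned, which is no longer guaranteed; the paper's restriction to types $p/q$ and $p$ (so that every intermediate sequent has the rigid form $p_1/q_1,\dotsc,p_{n-1}/q_{n-1},p_n\to s$) is precisely what makes that half of the argument go through.
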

	\begin{proof}[Proof (of Theorem \ref{th_cs_to_neck}).]
		Define $\mathcal{A}$ and $\mathcal{S}$ inductively as follows: 
		\begin{enumerate}
			\item $\mathcal{A}(p)=p$, $\mathcal{S}(p)=p^\neck$;
			\item $\mathcal{A}(B\backslash A)=\mathcal{S}(B) \backslash \mathcal{A}(A)$, $\mathcal{S}(B\backslash A)=(\mathcal{A}(B)\backslash \mathcal{S}(A))^\neck$;
			\item $\mathcal{A}(A/B)=\mathcal{A}(A)/\mathcal{S}(B)$, $\mathcal{S}(A/B)=(\mathcal{S}(A)/\mathcal{A}(B))^\neck$;
			\item $\mathcal{A}(A\cdot B)=\mathcal{A}(A)\cdot \mathcal{A}(B)$, $\mathcal{S}(A\cdot B)=(\mathcal{S}(A)\cdot \mathcal{S}(B))^\neck$.
		\end{enumerate}
		Given a derivation of the sequent $\LC+(\CS)\vdash \Pi \to A$, we remodel it into a derivation of $\mathcal{A}(\Pi) \to \mathcal{S}(A)$ as follows: each time when a new type appears in a succedent (consider the derivation from top to bottom), we apply $(\to^\neck)$ and put $^\neck$ on it; each application of $(\CS)$ then can be replaced by $(\neck)$. The remaining rules are not changed. This yields a correct derivation in $\LC^\neck$.
		Conversely, if a derivation of $\LC^\neck \vdash \mathcal{A}(\Pi) \to \mathcal{S}(A)$ is given, then one can just remove all $^\neck$ operations and $(\to^\neck)$ rules, and obtain a correct derivation in $\LC+(\CS)$.
	\end{proof}
	\begin{proof}[Proof (of Theorem \ref{th_perm_cs}).]
		Given a regular language $L$, let $Gr = \langle \Sigma, s, \triangleright \rangle$ be an $\LC$-grammar recognizing it such that types of its dictionary are either of the form $p/q$ or of the form $p$ for $p$, $q$ being primitive (such a grammar can be obtained from a right-linear grammar using a standard construction from \cite{Bar-Hillel60}). Note that $s$ is primitive. Let us consider the same grammar $\langle \Sigma, S, \triangleright \rangle$ as an $(\LC+\CS)$-grammar; denote the language recognized by it as $L^\prime$. We argue that $L^\prime=\textsc{perm}(L)$. 
		
		Firstly, we show that $\textsc{perm}(L) \subseteq L^\prime$. Let us prove the following proposition: $w \in \textsc{perm}(L(Gr_q))$ implies $w \in L^\prime(Gr_q)$ where $Gr_q=\langle \Sigma, q, \triangleright \rangle$, and $L^\prime(Gr_q)$ is the language generated by $Gr_q$ considered as an $(\LC+\CS)$-grammar. It is proved by induction on the length of $w$. The case $|w|=1$ is clear. Now, let $w=a_1\dotsc a_{n}\in \textsc{perm}(L(Gr_q))$; that is, there are types $T_i$ such that $a_{\sigma(i)} \triangleright T_{i}$, and $\LC \vdash T_1, \dotsc, T_n \to q$. Then $T_1=q/r$, and $\LC \vdash T_2, \dotsc, T_n \to r$. Hence, $a_{1}\dotsc a_{\sigma(1)-1}a_{\sigma(1)+1}\dotsc a_n \in \textsc{perm}(L(Gr_r))$, and, by the induction hypothesis, it belongs to $L^\prime(Gr_r)$; that is, $a_i \triangleright T_i^\prime$ for $i\ne \sigma(1)$, and $\LC+\CS \vdash T_1^\prime,\dotsc, T_{\sigma(i)-1}^\prime T_{\sigma(i)+1}^\prime \dotsc T_{n}^\prime \to r$. Finally, consider the following derivation:
		$$
		\infer[(\CS)]{
			T_1^\prime,\dotsc, T_{\sigma(i)-1}^\prime, q/r, T_{\sigma(i)+1}^\prime \dotsc T_{n}^\prime \to q
		}
		{
			\infer[(/\to)]
			{
				q/r, T_{\sigma(i)+1}^\prime \dotsc T_{n}^\prime, T_1^\prime,\dotsc, T_{\sigma(i)-1}^\prime \to q
			}
			{
				q \to q
				&
				\infer[(\CS)]{
					T_{\sigma(i)+1}^\prime \dotsc T_{n}^\prime, T_1^\prime,\dotsc, T_{\sigma(i)-1}^\prime \to r
				}
				{
					T_1^\prime,\dotsc, T_{\sigma(i)-1}^\prime, T_{\sigma(i)+1}^\prime \dotsc T_{n}^\prime \to r
				}
			}
		}
		$$
		This justifies that $w$ belongs to $L^\prime(Gr_q)$. The corollary is that $\textsc{perm}(L)=\textsc{perm}(L(Gr_s))\subseteq L^\prime(Gr_s)=L^\prime$.
		
		Secondly, let us show that $L^\prime \subseteq \textsc{perm}(L)$. Let $a_1\dotsc a_n$ belong to $L^\prime$; then there are types $T_1,\dotsc,T_n$ such that $a_i\triangleright T_i$, and $\LC+\CS \vdash T_1,\dotsc,T_n \to s$. There is exactly one $i\in\{1,\dotsc,n\}$ such that $T_i$ is a primitive type. The sequent $T_{i+1},\dotsc,T_n,T_1,\dotsc,T_i\to s$ is also derivable; without loss of generality we can denote its antecedent as $p_1/q_1,\dotsc, p_{n-1}/q_{n-1},p_n$. 
		\begin{lemma}\label{lemma_perm_simple}
			If $\LC+\CS \vdash p_1/q_1,\dotsc, p_{n-1}/q_{n-1},p_n \to s$ for some primitive types $p_i$, $q_i$ and $s$, then there exists a permutation $\sigma\in S_{n-1}$ such that $q_{\sigma(i)}=p_{\sigma(i+1)}$ for $i=1,\dotsc,n-2$, and $q_{\sigma(n-1)}=p_n$, $p_{\sigma(1)}=s$.
		\end{lemma}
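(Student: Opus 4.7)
The plan is to induct on the size of a cut-free derivation of the given sequent. Cut elimination for $\LC+(\CS)$ comes essentially for free from Theorem \ref{th_cs_to_neck} combined with cut elimination for $\LC^\neck$: one translates the derivation into $\LC^\neck$, eliminates cuts there, and strips the $^\neck$ connectives back out, turning $(\neck)$ into $(\CS)$. Because all antecedent types are of the form $p/q$ or primitive, and the succedent $s$ is primitive, a straightforward inspection of the rules shows that every sequent appearing in the derivation has a primitive succedent, and the only rules that can be used are the axiom, $(/\to)$, and $(\CS)$.

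Since the rule $(\CS)$ can place the unique primitive antecedent type at any position, it is convenient to prove a slightly more general statement: for any derivable sequent $\Pi \to r$ with $r$ primitive and $\Pi$ consisting of $m$ types of the form $p/q$ together with exactly one primitive $t$, the multiset $\Pi$ can be arranged as $r/u_1, u_1/u_2, \ldots, u_{m-1}/t, t$ for suitable primitive types $u_i$. The original lemma is then the case $r = s$, $t = p_n$, $m = n-1$.

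The main technical ingredient is an auxiliary non-derivability lemma: no sequent $\Pi \to r$ with $r$ primitive and $\Pi$ consisting solely of $p/q$ types is derivable in $\LC+(\CS)$. This follows by a short induction on cut-free derivation length; the axiom case would require a primitive in $\Pi$, and the rules $(\CS)$ and $(/\to)$ both yield a premise of the same forbidden shape (for $(/\to)$, the right premise $\Pi'' \to q_k$ still has an all-$p/q$ antecedent), so the induction hypothesis closes each case.

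With this tool in hand, the main induction proceeds. The axiom case gives $m = 0$, $r = t$, and the empty chain suffices. The $(\CS)$ case leaves the multiset of $\Pi$ unchanged, so the induction hypothesis applies directly. The key case is $(/\to)$ applied to some $p_k/q_k$, producing premises $\Gamma, p_k, \Delta \to r$ and $\Pi' \to q_k$ with $\Pi'$ nonempty. The auxiliary lemma forces the unique primitive $t$ to lie in $\Pi'$ rather than in $\Gamma \cup \Delta$, since otherwise $\Pi' \to q_k$ would be an unprovable sequent of the forbidden all-$p/q$ shape. Applying the induction hypothesis to both premises yields a chain from $r$ to $p_k$ for the left premise and a chain from $q_k$ to $t$ for the right premise; splicing them by inserting $p_k/q_k$ at the junction produces the desired chain for $\Pi \to r$. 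The main obstacle is precisely this case split, namely ruling out the scenario where the primitive $t$ ends up in the $\Gamma, \Delta$ part of the split, and the auxiliary non-derivability lemma is exactly what addresses this point.
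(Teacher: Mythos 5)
Your proof is correct and follows essentially the same route as the paper's: induction on the derivation, with the $(\CS)$ case absorbed by working up to permutation and the $(/\to)$ case handled by splicing the chains from the two premises. Your auxiliary non-derivability lemma (no all-$p/q$ antecedent is derivable) is just a repackaging of the paper's observation that every antecedent in the derivation contains exactly one primitive type, and it serves the identical purpose of locating the unique primitive in the right premise.
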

		This lemma is proved by a straightforward induction; see Appendix \ref{app_proof_lemma_perm_simple}. It yields that we can permute types in $T_1,\dotsc,T_n\to s$ in such a way that the resulting sequent $T_{\sigma(1)},\dotsc,T_{\sigma(n)}\to s$ will be derivable in the standard Lambek calculus $\LC$; this means that $a_{\sigma(1)}\dotsc a_{\sigma(n)}$ belongs to $L$.
	\end{proof}
	\begin{proof}[Proof (of Theorem \ref{th_perm})]
		It suffices to take an $(\LC+(\CS))$-grammar $\langle \Sigma, s, \triangleright\rangle$ from Theorem \ref{th_perm_cs} and transform it into the grammar $\langle \Sigma, \mathcal{S}(s), \triangleright^\prime \rangle$ where $a\triangleright T$ if and only if $a \triangleright^\prime \mathcal{A}(T)$.
	\end{proof}
	\begin{example}
		The Lambek grammar $\langle\{a,b,c\},s,\triangleright\rangle$ where $a\triangleright s/q/p$, $a\triangleright s/s/q/p$, $b\triangleright p$, $c\triangleright q$ recognizes the language $\{abc\}^+$. Then the grammar $\langle\{a,b,c\},s^\neck,\triangleright^\prime\rangle$ where $a\triangleright^\prime s/q^\neck/p^\neck$, $a\triangleright^\prime s/s^\neck/q^\neck/p^\neck$, $b\triangleright^\prime p$, $c\triangleright^\prime q$ recognizes the language $\textsc{perm}(\{abc\}^+)$.
	\end{example}
	Therefore, we have proved an interesting fact: while context-free languages are closed under cyclic shifts, the cyclic shift operation added in the Lambek calculus places us beyond the context-free bounds. 
	
	Notice also that, if $\mathrm{CFL}$ denotes the set of context-free languages, then not only languages from $\textsc{perm}(\mathrm{CFL})$ can be recognized by $\LC^\neck$-grammars but also those from $\textsc{perm}(\mathrm{CFL})\cdot \textsc{perm}(\mathrm{CFL})$, $\textsc{perm}(\mathrm{CFL})\cdot \textsc{perm}(\mathrm{CFL}) \cdot \textsc{perm}(\mathrm{CFL})$ and so forth. It remains an open question if $\LC^\neck$-grammars can recognize languages that violate the Parikh theorem, in particular, languages over a one-symbol alphabet with nonlinear growth.
	
	Another result is related to Theorem \ref{th_even_calculus}.
	\begin{definition}
		An $\LC^\neck$-grammar $\langle\Sigma, S, \triangleright \rangle$ is called \emph{even-cyclic} if its dictionary contains only odd-cyclic types, and $S$ is even-cyclic.
	\end{definition}
	\begin{theorem}\label{th_even_grammar}
		Each $\LC^\neck$-grammar can be converted into an equivalent even-cyclic grammar.
	\end{theorem}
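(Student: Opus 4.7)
The plan is to derive this theorem almost directly from Theorem \ref{th_even_calculus} by applying the type transformations $o_N$ and $e_N$ uniformly to the grammar, for a single value of $N$ chosen large enough to accommodate every type occurring in the grammar.

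First, given an $\LC^\neck$-grammar $G = \langle \Sigma, S, \triangleright \rangle$, I would let $N$ denote the maximum size of types in the finite set $\{S\} \cup \{T \mid \exists a \in \Sigma,\ a \triangleright T\}$ (the toolbox together with the distinguished type). This maximum exists because $\Sigma$ and $\triangleright$ are finite. I would then define a new grammar $G' = \langle \Sigma, e_N(S), \triangleright' \rangle$ by setting $a \triangleright' o_N(T)$ exactly when $a \triangleright T$.

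Next, I would verify that $G'$ is an even-cyclic grammar. By Theorem \ref{th_even_calculus}(1), $e_N(S)$ is even-cyclic and each type in the dictionary of $G'$, being of the form $o_N(T)$, is odd-cyclic. Then I would show $L(G) = L(G')$: for a word $a_1 \dotsc a_n$ and any family of dictionary types $T_i$ with $a_i \triangleright T_i$, all of $T_1, \dotsc, T_n, S$ lie in $Tp^\neck_{\le N}$ by the choice of $N$, so Theorem \ref{th_even_calculus}(2) gives $\LC^\neck \vdash T_1, \dotsc, T_n \to S$ iff $\LC^\neck \vdash o_N(T_1), \dotsc, o_N(T_n) \to e_N(S)$. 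The right-hand side is precisely the derivability condition for $a_1 \dotsc a_n \in L(G')$, since the dictionary assignments in $G'$ are obtained from those in $G$ by composing with $o_N$.

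There is no substantive obstacle here: the heavy lifting, namely the invariance of $\LC^\neck$-derivability under the transformations $e_N$ and $o_N$, is already supplied by Theorem \ref{th_even_calculus}. The only mild subtlety is that $e_N$ and $o_N$ depend on the global bound $N$, so $N$ must be fixed once and for all from the original grammar before performing the translation; this is legitimate because the grammar is finite and therefore induces a finite bound on type sizes.
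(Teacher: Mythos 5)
Your proposal is correct and matches the paper's own proof essentially verbatim: both fix $N$ as the maximal size of types in the toolbox together with $S$, pass to the grammar $\langle \Sigma, e_N(S), \triangleright'\rangle$ with $a \triangleright' o_N(T)$ iff $a \triangleright T$, and invoke Theorem \ref{th_even_calculus} for both the even/odd-cyclicity of the new types and the language equivalence.
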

	This theorem essentially says that we can transform any grammar into an equivalent one in such a way that, when one considers a derivation of sequent made of types of the new grammar, a type of the form $A^\neck$ can occur only in a succeedent of a sequent within such a derivation. 
	\begin{proof}
		An initial grammar $Gr=\langle \Sigma, S, \triangleright \rangle$ is transformed into an even grammar $Gr^\prime=\langle \Sigma, e_N(S), \triangleright^\prime \rangle$ where $a \triangleright^\prime T^\prime$ if and only if $T^\prime = o_N(T)$ such that $a \triangleright T$; $N$ is the maximal size of types in the toolbox of $Gr$ along with $S$. Theorem \ref{th_even_calculus} implies that $L(Gr)=L(Gr^\prime)$.
	\end{proof}
	\begin{definition}
		For each type $A$, $A^\unneck$ is obtained from $A$ by removing all $^\neck$ connectives. Given an $\LC^\neck$-grammar $Gr$, $Gr^\unneck$ is a grammar such that each type $A$ being from the dictionary of $Gr$ or being a distinguished type is replaced by $A^\unneck$.
	\end{definition}
	\begin{definition}
		Given an $\LC$-grammar $Gr$, let us denote the same grammar considered as an $(\LC+\CS)$-grammar by $Gr^{\CS}$.
	\end{definition}
	\begin{theorem}
		For each even grammar $Gr$ it holds that $L(Gr^\unneck)\subseteq L(Gr) \subseteq L((Gr^\unneck)^{\CS})$.
	\end{theorem}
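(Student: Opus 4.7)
The plan is to use a bridge lemma relating each type to its $^\unneck$-stripped version and to apply it in opposite directions for the two inclusions. I would prove, by mutual induction on the structure of types, the following: if $T$ is odd-cyclic then $\LC^\neck\vdash T\to T^\unneck$, and if $S$ is even-cyclic then $\LC^\neck\vdash S^\unneck\to S$. The inductive step exploits the way odd-/even-cyclicity propagates under the connectives: for example, if $T=A\backslash B$ is odd-cyclic then $A$ is even-cyclic and $B$ is odd-cyclic, so the inductive hypotheses give $A^\unneck\to A$ and $B\to B^\unneck$, which combine through $(\backslash\to)$ and $(\to\backslash)$ to yield $T\to T^\unneck$; the case $T=A^\neck$ only arises on the even-cyclic side, where $A$ must itself be even-cyclic and one applies $(\to^\neck)$ to the inductively obtained $A^\unneck\to A$. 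The other connectives are handled analogously.

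For the first inclusion $L(Gr^\unneck)\subseteq L(Gr)$: a word $a_1\dotsc a_n\in L(Gr^\unneck)$ yields odd-cyclic toolbox types $T_i$ of $Gr$ with $a_i\triangleright T_i$ and a derivation $\LC\vdash T_1^\unneck,\dotsc,T_n^\unneck\to S^\unneck$. Applying the bridge arrows $T_i\to T_i^\unneck$ on the antecedent side and $S^\unneck\to S$ on the succedent side via cut (admissible in $\LC^\neck$ by the cut-elimination theorem), one obtains $\LC^\neck\vdash T_1,\dotsc,T_n\to S$, so the word lies in $L(Gr)$.

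For the second inclusion $L(Gr)\subseteq L((Gr^\unneck)^\CS)$, the key is a \emph{polarity invariant}. Assign each subtype occurrence in the endsequent $T_1,\dotsc,T_n\to S$ a polarity: positive if it sits at an odd position inside some $T_i$ or at an even position inside $S$, negative otherwise. Because every $T_i$ is odd-cyclic and $S$ is even-cyclic, every subtype of the form $B^\neck$ in the endsequent is positive. Since the subformula property of cut-free derivations preserves polarity, no occurrence of a type of the form $B^\neck$ can appear at the top of an antecedent in any derived sequent; in particular the rule $(^\neck\to)$ is never used. One then transforms a cut-free derivation of $T_1,\dotsc,T_n\to S$ by stripping every $^\neck$ from every sequent: axioms and the Lambek rules remain valid because $^\unneck$ commutes with $\backslash,/,\cdot$; applications of $(\to^\neck)$ collapse to trivial repetitions and are deleted; and applications of $(\neck)$ become applications of $(\CS)$. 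The result is a derivation $\LC+(\CS)\vdash T_1^\unneck,\dotsc,T_n^\unneck\to S^\unneck$, placing the word in $L((Gr^\unneck)^\CS)$.

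The main obstacle will be the polarity invariant for the second inclusion: one must verify rule-by-rule that the polarity of each subtype occurrence is preserved along a cut-free derivation, with particular care at $(\backslash\to)$ and $(/\to)$, where parity flips inside the divisor argument, and at $(\neck)$, which shuffles antecedents without disturbing polarities. Once the invariant is secured, the remainder of the argument in both directions is essentially bookkeeping.
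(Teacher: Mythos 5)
Your argument is correct, and for the second inclusion it coincides with the paper's: strip every $^\neck$ from a cut-free derivation of $T_1,\dotsc,T_n\to S$, turning $(\to^\neck)$ into trivial repetitions and $(\neck)$ into $(\CS)$. For the first inclusion, however, you take a genuinely different route. The paper works directly on the derivation of $T_1^\unneck,\dotsc,T_n^\unneck\to S^\unneck$, reinserting $(\to^\neck)$ at the points where a succedent of the form $A^\neck$ must be produced; you instead isolate a syntactic bridge lemma --- $\LC^\neck\vdash T\to T^\unneck$ for odd-cyclic $T$ and $\LC^\neck\vdash S^\unneck\to S$ for even-cyclic $S$, proved by a clean mutual induction on types --- and then glue everything with cut, which is admissible by the cut-elimination theorem. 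Your decomposition is more modular: it makes explicit exactly where even-cyclicity is used (the case $T=A^\neck$ cannot arise on the odd-cyclic side, which is precisely why $p^\neck\to p$ never has to be derived), it avoids reasoning about an arbitrary derivation tree, and the bridge lemma is reusable; the paper's transformation is more direct but, as stated, glosses over where the $(\to^\neck)$ insertions go. One remark on the second inclusion: the polarity invariant you flag as the main obstacle is actually dispensable. Even if $(^\neck\to)$ did occur in the derivation, stripping $^\neck$ turns its premise $B\to A^\neck$ and conclusion $B^\neck\to A^\neck$ into the identical sequent $B^\unneck\to A^\unneck$, i.e.\ another trivial repetition to be deleted; so the stripping argument goes through for an arbitrary grammar without any positivity analysis (even-cyclicity is only genuinely needed for the first inclusion).
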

	\begin{proof}[Proof sketch.]
		To prove that $L(Gr^\unneck)\subseteq L(Gr)$ it suffices to consider a derivation of a sequent of the form $A_1^\unneck,\dotsc, A_n^\unneck \to B^\unneck$ and to observe that it can be considered as the derivation of $A_1,\dotsc,A_n \to B$ with that only difference that we need to use $(\to^\neck)$ several times, when necessary. Here we use that the grammar is even because otherwise we would possibly need to use $(^\neck\to)$ in this derivation whereas it is not guaranteed that there will be appropriate conditions to use it.
		
		To prove that $L(Gr) \subseteq L((Gr^\unneck)^{\CS})$ it suffices to notice that, if we have a derivation of $\LC^\neck \vdash A_1,\dotsc,A_n\to B$ where $A_i$ are odd and $B$ is even, and if we remove all $^\neck$ operations from these types, then each application of $(\to^\neck)$ becomes redundant and can be removed, while each application of $(\neck)$ becomes an application of $(\neck)$.
	\end{proof}
	\section{Cyclic Shift in a Hypergraph Lambek Calculus and Bracelet Operation}\label{sec_Lbrac}
	In our work \cite{Pshenitsyn21}, we introduce a generalization of the Lambek calculus to hypergraphs called the hypergraph Lambek calculus $\mathrm{HL}$. This formalism has a cumbersome definition but it does naturally generalize the Lambek calculus to hypergraphs along with its different variants. In our opinion, one of useful features of this calculus is that it enables one to regard seemingly different operations added to the Lambek calculus as parts of a single formalism. $\mathrm{HL}$ underlies fundamental properties of the Lambek calculus that also hold for a number of its variants (e.g. the cut elimination can be proved for it).
	
	The hypergraph Lambek calculus regards strings as oriented string graphs (also called \emph{open chains}, or \emph{open walks}); we will formally introduce this representation later. How can the cyclic shift be represented as an operation on string graphs? Notice that, if one glues the first and the last vertices of a string graph, he/she obtains a cycle; the inverse procedure of ungluing vertices can be done in several ways, and it yields a cyclic shift of the initial string. In this section, we are going to investigate the possibility of representation of $^\neck$ within the hypergraph Lambek calculus using the above idea. To make this work self-contained, in Section \ref{ssec_def_HL} we will succinctly introduce $\mathrm{HL}$, though omitting examples and additional motivation; more details can be seen in \cite{Pshenitsyn21,Pshenitsyn21_2}. 
	
	\subsection{Hypergraph Lambek Calculus}\label{ssec_def_HL}
	Formal definitions of hypergraphs and of hyperedge replacement are given in this section according to the handbook chapter \cite{Drewes97} on HRGs. The remaining definitions are from \cite{Pshenitsyn21}.
	\begin{itemize}
		\item Let us fix the set $C$ of labels along with the function $rank: C\to \mathbb{N}$. \emph{A hypergraph over $C$} is a tuple $G=\langle V_G, E_G, att_G, lab,_G ext_G \rangle$ where $V_G$ is the set of nodes; $E_G$ is the set of hyperedges; $att_G: E_G\to V_G^\ast$ is an attachment function; $lab_G: E_G \to C$ is a labeling function that satisfies the condition $rank(lab_G(e))=|att_G(e)|$ for all $e\in E$; $ext\in V^\circledast$ is the ordered set of \emph{disctint} external nodes. An isomorphism between hypergraphs is defined in a standrard way as a pair of bijections of nodes and hyperedges preserving all functions along with an ordered set of external nodes. Furthermore, we do not distinguish between isomorphic hypergraphs.
		\item $rank_G: E_G\to \mathbb{N}$ is defined as follows: $rank_G(e)\eqdef |att_G(e)|$. Besides, we define $rank(G)\eqdef |ext_G|$.
		\item A hypergraph $H=\langle \{v_i\}_{i=1}^n,\{e_0\},att,lab,v_1\dots v_n\rangle$ where $att(e_0)=v_1\dots v_n$ and $lab(e_0)=a$ is denoted as $a^\bullet$.
		\item A string graph induced by the string $w=a_1\dots a_n$ is a hypergraph $\SG(w):=\\\langle \{v_i\}_{i=0}^n,\{e_i\}_{i=1}^n,att,lab,v_0v_n \rangle$ where $att(e_i)=v_{i-1}v_i$, $lab(e_i)=a_i$ for $i=1,\dotsc,n$.
		\item Replacement of a hyperedge $e$ in a hypergraph $G$ by a hypergraph $H$ can be done, if $rank_G(e)=rank(H)$, as follows: we remove $e$ from $G$, add a copy of $H$ and for all $i$ fuse the $i$-th external node of $H$ with the $i$-th attachment node of $e$. The result of this replacement is denoted as $G[e/H]$.
	\end{itemize}
	Now we start defining the hypergraph Lambek calculus. Firstly, we inductively define the set of types $Tp(\mathrm{HL})$. Note that types will serve as labels on hypergraphs, hence, we need to define $rank$ on them. As in $\mathrm{L}$, in $\mathrm{HL}$ we fix the set of primitive types $Pr$ (but now we also define $rank$ on $Pr$); we assume that for each $n$ there exist infinitely many labels $p\in Pr$ such that $rank(p)=n$. Then, let us introduce two operations of $\mathrm{HL}$ denotes by $\div$ and $\times$:
	\begin{enumerate}
		\item We fix a countable set of labels $\$_n,n\in\mathbb{N}$ and set $rank(\$_n)=n$; let us agree that they do not belong to any other sets considered in the definition of the calculus. 
		\\
		Let $N$ be a type, and let $D$ be a hypergraph such that all labels of its hyperedges, except for one, are already defined types, and one of them equals $\$_d$; let also $rank(N)=rank(D)$. Then $N\div D$ is also a type, and $rank(N\div D)\eqdef d$. The hyperedge of $D$ labeled by $\$_d$ is denoted by $e^\$_D$.
		\item If $M$ is a hypergraph labeled by already defined types, then $\times(M)$ is also a type, and $rank(\times(M))\eqdef rank(M)$.
	\end{enumerate}
	A sequent is a structure of the form $H\to A$ where $H$ is a hypergraph labeled by types, and $A$ is a type.
	
	The hypergraph Lambek calculus $\mathrm{HL}$ deals with hypergraph sequents: a sequent is derivable if it can be obtained from the axiom using rules described below. The only \textbf{axiom} of $\mathrm{HL}$ is $p^\bullet\to p,\quad p\in Pr$. 
	\begin{enumerate}
		\item\label{subsec_div_to} \textbf{Rule $(\div\to)$.} Let $N\div D$ be a type such that $E_D=\{e^\$_D,e_1,\dots,e_k\}$. Let $H\to A$ be a hypergraph sequent and let $e\in E_H$ be labeled by $N$. Then the rule $(\div\to)$ is the following:
		$$
		\infer[(\div\to)]{H[e/D][e^\$_D/ (N\div D)^\bullet][d_1/H_1]\dots[d_k/H_k]\to A}{H\to A & H_1\to lab(d_1) &\dots & H_k\to lab(d_k)}
		$$
		This rule explains how a type with division may appear in the antecedent of a sequent: we replace a hyperedge $e$ by $D$, put a label $N\div D$ instead of $\$_d$ and replace the remaining labels of $D$ by antecedents of corresponding sequents standing in premises. 
		\item \textbf{Rule $(\to\div)$.} 
		$$
		\infer[(\to\div)]{F\to N\div D}{D[e^\$_D/F]\to N}
		$$
		This rule is understood as follows: if there are such hypergraphs $D,F$ and such a type $N$ that in a derivable sequent $H\to N$ $H=D[e_0/F]$, then $F\to N\div D$ is also derivable.
		\item \textbf{Rule $(\times\to)$.} Let $G\to A$ be a hypergraph sequent and let $e\in E_G$ be labeled by $\times(F)$. Then
		$$
		\infer[(\times\to)]{G\to A}{G[e/F]\to A}
		$$
		Intuitively speaking, there is a subgraph $F$ of the antecedent in the premise, and it is ``compressed'' into a single $\times(F)$-labeled hyperedge.
		\item \textbf{Rule $(\to\times)$.} Let $\times(M)$ be a type and let $E_M=\{m_1,\dots,m_l\}$. Then
		$$
		\infer[(\to\times)]{M[m_1/H_1]\dots[m_l/H_l]\to\times(M)}{H_1\to lab(m_1) & \dots & H_l\to lab(m_l)}
		$$
		This means that several sequents can be combined into a single one via a hypergraph structure $M$.
	\end{enumerate}
	
	\subsection{An Attempt to Embed $\LC^\neck$ in $\mathrm{HL}$}
	How can the Lambek calculus be embedded in the hypergraph Lambek calculus? In \cite{Pshenitsyn21}, we present the following translating function:
	\begin{multicols}{2}
		\begin{itemize}
			\item $tr(p)\eqdef p, \quad p\in Pr, type(p)=2$;
			\item $tr(A/B)\eqdef tr(A)\div\SG(\$_2\:tr(B))$;
			\item $tr(B\backslash A)\eqdef tr(A)\div\SG(tr(B)\:\$_2)$;
			\item $tr(A\cdot B)\eqdef \times(\SG(tr(A)\:tr(B)))$.
		\end{itemize}
	\end{multicols}
	A sequent $A_1,\dotsc,A_n \to B$ is translated into the sequent $\SG(tr(A_1),\dotsc,tr(A_n)) \to tr(B)$.
	\begin{example}
		The type $r\backslash (p\cdot q)$ is translated into the type
		$$
		\times\left(\mbox{	
			{\tikz[baseline=.1ex]{
					\node[node,label=left:{\scriptsize $(1)$}] (N1) {};
					\node[node,right=8mm of N1] (N2) {};
					\node[node,right=8mm of N2,label=right:{\scriptsize $(2)$}] (N3) {};
					\draw[->,black] (N1) -- node[above] {$p$} (N2);
					\draw[->,black] (N2) -- node[above] {$q$} (N3);
		}}}\right)\div\left(\mbox{	
			{\tikz[baseline=.1ex]{
					\node[node,label=left:{\scriptsize $(1)$}] (N1) {};
					\node[node,right=8mm of N1] (N2) {};
					\node[node,right=8mm of N2,label=right:{\scriptsize $(2)$}] (N3) {};
					\draw[->,black] (N1) -- node[above] {$r$} (N2);
					\draw[->,black] (N2) -- node[above] {$\$_2$} (N3);
		}}}\right)
		$$
		Here we draw a hypergraph to visualize the internal structure of the type. Black circles represent nodes; arrows represent edges of rank 2 going from the first attachment node to the second one; external nodes are depicted by numbers in brackets.
	\end{example}
	
	It is not hard to observe that the rules of $\mathrm{HL}$ turn into the rules of $\mathrm{L}$ if all involved hypergraphs are string graphs (this is a theorem from \cite{Pshenitsyn21}). We emphasize that in the Lambek calculus there is a restriction that an antecedent must be nonempty; in the hypergraph calculus, this restriction is expressed by the fact that external nodes are distinct (since the empty string corresponds to the string graph $E$ containing one node $v$ and no edges such that $ext_E=vv$; in our framework such hypergraphs are forbidden). 
	
	We are interested in embedding types $A^\neck$ in $\mathrm{HL}$ using some hypergraph construction. We are going to use the following hypergraph:
	$$
	Loop(A)\eqdef 
	\vcenter{\hbox{{\tikz[baseline=.1ex] {
					\node [node] {} edge [in=45,out=-45,loop,label=right:{}] (N1);
					\node[right=2.5mm of N1] {$A$} (N2);
	}}}}
	= \langle\{v_l\},\{e_l\},att,lab,\varepsilon\rangle \mbox{ where } att(e_l)=v_0v_0\mbox{, }lab(e_l)=A
	$$
	Look at the following derivation in $\mathrm{HL}$:
	$$
	\infer[(\to\div)]
	{
		\vcenter{\hbox{{\tikz[baseline=.1ex]{
						\node[node,label=left:{\scriptsize $(1)$}] (N1) {};
						\node[node,right=8mm of N1] (N2) {};
						\node[node,right=8mm of N2] (N3) {};
						\node[node,right=8mm of N3,label=right:{\scriptsize $(2)$}] (N4) {};
						\draw[->,black] (N1) -- node[above] {$q$} (N2);
						\draw[->,black] (N2) -- node[above] {$r$} (N3);
						\draw[->,black] (N3) -- node[above] {$p$} (N4);
		}}}}
		\to 
		\times(Loop(\times(\SG(pqr))))\div Loop(\$_2)
	}
	{
		\infer[(\to\times)]
		{
			\vcenter{\hbox{{\tikz[baseline=.1ex] {
							\node[node] (N1) {};
							\node[node, right=6mm of N1] (N2) {};
							\node[node, above right=5.2mm and 3mm of N1] (N3) {};
							\draw[>=stealth,->,black] (N1) to[bend right = 55] node[below] {$p$} (N2);
							\draw[>=stealth,->,black] (N2) to[bend right = 55] node[above right] {$q$} (N3);
							\draw[>=stealth,->,black] (N3) to[bend right = 55] node[above left] {$r$} (N1);
			}}}}
			\to
			\times(Loop(\times(\SG(pqr))))
		}
		{
			\infer[(\to\times)]
			{
				\vcenter{\hbox{{\tikz[baseline=.1ex]{
								\node[node,label=left:{\scriptsize $(1)$}] (N1) {};
								\node[node,right=8mm of N1] (N2) {};
								\node[node,right=8mm of N2] (N3) {};
								\node[node,right=8mm of N3,label=right:{\scriptsize $(2)$}] (N4) {};
								\draw[->,black] (N1) -- node[above] {$p$} (N2);
								\draw[->,black] (N2) -- node[above] {$q$} (N3);
								\draw[->,black] (N3) -- node[above] {$r$} (N4);
				}}}}
				\to 
				\times(\SG(pqr))
			}
			{
				p^\bullet \to p & q^\bullet \to q & r^\bullet \to r
			}
		}
	}
	$$
	The second application of $(\to\times)$ is performed as follows: we replace the only hyperedge in $Loop(A)$ by the antecedent of the second sequent, namely, by $\SG(pqr)$; this results in gluing external nodes of this string graph. The resulting cycle does not have external edges, therefore, it contains no information about where is the beginning and the end of the former string graph. The rule $(\to\div)$ works here as an inverse procedure: in the resulting sequent such a hypergraph stands that, if we replace the only hyperedge in $Loop(\$_2)$ by it, the result will coincide with the hypergraph standing in the premise. Therefore, the derivation is correct. Hence we have a plausible candidate to play the role of $^\neck$:
	\begin{definition}
		$\neck_{\mathrm{H}}(A)\eqdef \times(Loop(A)) \div Loop(\$_2)$.
	\end{definition}
	We will check later that the following rules are admissible in $\mathrm{HL}$:
	$$
	\infer[]{\SG(\Pi) \to \neck_{\mathrm{H}}(A)}{\SG(\Pi) \to A}
	\qquad
	\infer[]{\SG(\Psi,\Pi) \to \neck_{\mathrm{H}}(A)}{\SG(\Pi,\Psi) \to \neck_{\mathrm{H}}(A)}
	\qquad
	\infer[]{(\neck_{\mathrm{H}}(B))^\bullet \to \neck_{\mathrm{H}}(A)}{B^\bullet \to \neck_{\mathrm{H}}(A)}
	$$
	Therefore, if we define $tr(A^\neck)$ as $\neck_{\mathrm{H}}(tr(A))$, then each derivable in $\LC^\neck$ sequent is converted into a derivable in $\mathrm{HL}$ sequent. However, the converse turns out to be false. 
	\begin{example}\label{ex_two_bad_cases}
		There can be two kinds of undesirable cases in the hypergraph Lambek calculus:
	$$
	\infer[(\to\div)]
	{
		\vcenter{\hbox{{\tikz[baseline=.1ex] {
						\node[node,label=right:{\scriptsize $(2)$}] (N1) {};
						\node[node, below right=3mm and 5.2mm of N1] (N2) {};
						\node[node, above right= 3mm and 5.2mm of N1] (N3) {};
						\node[node,left=2mm of N1,label=left:{\scriptsize $(1)$}] (N4) {};
						\draw[>=stealth,->,black] (N1) to[bend right = 55] node[below] {$p$} (N2);
						\draw[>=stealth,->,black] (N2) to[bend right = 55] node[above right] {$q$} (N3);
						\draw[>=stealth,->,black] (N3) to[bend right = 55] node[above left] {$r$} (N1);
		}}}}
		\to 
		\neck_{\mathrm{H}}(\times(\SG(pqr)))
	}
	{
		\vcenter{\hbox{{\tikz[baseline=.1ex] {
							\node[node] (N1) {};
							\node[node, right=6mm of N1] (N2) {};
							\node[node, above right=5.2mm and 3mm of N1] (N3) {};
							\draw[>=stealth,->,black] (N1) to[bend right = 55] node[below] {$p$} (N2);
							\draw[>=stealth,->,black] (N2) to[bend right = 55] node[above right] {$q$} (N3);
							\draw[>=stealth,->,black] (N3) to[bend right = 55] node[above left] {$r$} (N1);
			}}}}
			\to
			\times(Loop(\times(\SG(pqr))))
	}
	\quad
	\infer[(\to\div)]
	{
		\vcenter{\hbox{{\tikz[baseline=.1ex]{
						\node[node,label=left:{\scriptsize $(1)$}] (N1) {};
						\node[node,right=6.6mm of N1] (N2) {};
						\node[node,right=6.6mm of N2] (N3) {};
						\node[node,right=6.6mm of N3,label=right:{\scriptsize $(2)$}] (N4) {};
						\draw[<-,black] (N1) -- node[above] {$r$} (N2);
						\draw[<-,black] (N2) -- node[above] {$q$} (N3);
						\draw[<-,black] (N3) -- node[above] {$p$} (N4);
		}}}}
		\to 
		\neck_{\mathrm{H}}(\times(\SG(pqr)))
	}
	{
		\vcenter{\hbox{{\tikz[baseline=.1ex] {
						\node[node] (N1) {};
						\node[node, right=6mm of N1] (N2) {};
						\node[node, above right=5.2mm and 3mm of N1] (N3) {};
						\draw[>=stealth,->,black] (N1) to[bend right = 55] node[below] {$p$} (N2);
						\draw[>=stealth,->,black] (N2) to[bend right = 55] node[above right] {$q$} (N3);
						\draw[>=stealth,->,black] (N3) to[bend right = 55] node[above left] {$r$} (N1);
		}}}}
		\to
		\times(Loop(\times(\SG(pqr))))
	}
	$$
	Both cases seem harmless since they lead to hypergraphs that are not string graphs. In the first example, the cycle does not split and does not transform into a string back; an additional isolated node appears. We suspect that this case indeed does not lead to derivable sequents corresponding to underivable sequents in $\LC^\neck$ (but we failed to prove it; see \ref{ssec_brac}). However, look at the following derivation:
	$$
	\infer[]{
		\vcenter{\hbox{{\tikz[baseline=.1ex]{
						\node[node,label=left:{\scriptsize $(1)$}] (N1) {};
						\node[node,right=6.6mm of N1] (N2) {};
						\node[node,right=6.6mm of N2] (N3) {};
						\node[node,right=6.6mm of N3,label=right:{\scriptsize $(2)$}] (N4) {};
						\draw[->,black] (N1) -- node[above] {$r$} (N2);
						\draw[->,black] (N2) -- node[above] {$q$} (N3);
						\draw[->,black] (N3) -- node[above] {$p$} (N4);
		}}}}
		\to 
		tr(((p^\neck \cdot q^\neck)\cdot r^\neck)^\neck)
	}{
		\infer[(\to\times)]{
			\vcenter{\hbox{{\tikz[baseline=.1ex]{
							\node[node,label=left:{\scriptsize $(1)$}] (N1) {};
							\node[node,right=6.6mm of N1] (N2) {};
							\node[node,right=6.6mm of N2] (N3) {};
							\node[node,right=6.6mm of N3,label=right:{\scriptsize $(2)$}] (N4) {};
							\draw[<-,black] (N1) -- node[above] {$p$} (N2);
							\draw[<-,black] (N2) -- node[above] {$q$} (N3);
							\draw[<-,black] (N3) -- node[above] {$r$} (N4);
			}}}}
			\to 
			tr((p^\neck \cdot q^\neck)\cdot r^\neck)
		}
		{
			\infer[(\to\times)]{
				\vcenter{\hbox{{\tikz[baseline=.1ex]{
								\node[node,label=left:{\scriptsize $(1)$}] (N2) {};
								\node[node,right=6.6mm of N2] (N3) {};
								\node[node,right=6.6mm of N3,label=right:{\scriptsize $(2)$}] (N4) {};
								\draw[<-,black] (N2) -- node[above] {$p$} (N3);
								\draw[<-,black] (N3) -- node[above] {$q$} (N4);
				}}}}
				\to 
				tr(p^\neck\cdot q^\neck)
			}
			{
				\infer[]{
					\vcenter{\hbox{{\tikz[baseline=.1ex]{
									\node[node,label=left:{\scriptsize $(1)$}] (N1) {};
									\node[node,right=8mm of N1,label=right:{\scriptsize $(2)$}] (N2) {};
									\draw[<-,black] (N1) -- node[above] {$p$} (N2);
					}}}}
					\to 
					tr(p^\neck)
				}
				{
					p^\bullet \to p
				}
				&
				\infer[]{
					\vcenter{\hbox{{\tikz[baseline=.1ex]{
									\node[node,label=left:{\scriptsize $(1)$}] (N1) {};
									\node[node,right=8mm of N1,label=right:{\scriptsize $(2)$}] (N2) {};
									\draw[<-,black] (N1) -- node[above] {$q$} (N2);
					}}}}
					\to 
					tr(q^\neck)
				}
				{
					q^\bullet \to q
				}
			}
			&
			\infer[]{
				\vcenter{\hbox{{\tikz[baseline=.1ex]{
								\node[node,label=left:{\scriptsize $(1)$}] (N1) {};
								\node[node,right=8mm of N1,label=right:{\scriptsize $(2)$}] (N2) {};
								\draw[<-,black] (N1) -- node[above] {$r$} (N2);
				}}}}
				\to 
				tr(r^\neck)
			}
			{
				r^\bullet \to r
			}
		}
	}
	$$
	Here transitions without markers of rules represent applications of the rules $(\to\times)$ and $(\to\div)$. The problem is that we ``flipped'' string graphs with $p$, $q$, and $r$ once, then we combined them using the product, and then we ``flipped'' the resulting hypergraph again, which lead us to a string graph. However, the sequent $r,q,p\to ((p^\neck\cdot q^\neck)\cdot r^\neck)$ is not derivable in $\LC^\neck$ (this is checked by a direct cut-free proof search).
	\end{example}
	Therefore, $\neck_{\mathrm{H}}(A)$ behaves differently from $A^\neck$ in the hypergraph case: it has an additional ability to flip string graphs. The reason for that is that a cycle graph can be rotated, which yields an isomorphic graph, but also it can be flipped. Similar objects appear in combinatorics; there they are called bracelets (as in \cite{Zelenyuk14}). 
	
	\subsection{Bracelet Operation}\label{ssec_brac}
	Now, we are interested in axiomatizing $\neck_{\mathrm{H}}$ in the standard Lambek calculus without appealing to hypergraphs. It appears that this can be done using the reversal operation (since it enables one to flip, or reverse, strings). 
	\begin{definition}
		The Lambek calculus with the bracelet operation $\LC^\brac$ is the calculus, which types are built using $\backslash$, $/$, $\cdot$, $^\rev$, and the unary operation $^\brac$; it includes the axiom and rules of $\LC^\rev$, and the following ones:
		$$
		\infer[(\to^\brac)]{\Pi\to A^\brac}{\Pi\to A}
		\qquad
		\infer[(^\brac\to)]{B^\brac\to A^\brac}{B\to A^\brac}
		\qquad
		\infer[(\brac)]{\Psi,\Pi \to A^\brac}{\Pi,\Psi\to A^\brac}
		\qquad
		\infer[(\mathrm{Ax}^{\rev\brac})]{A^\rev \to A^\brac}{}
		$$
	\end{definition}
	The distinguishing feature of this calculus is the new axiom $(\mathrm{Ax}^{\rev\brac})$. Note that the cut rule is also included in $\LC^\brac$, and the cut elimination theorem does not hold for it.
	\begin{example}
		The sequent $r,q,p \to (p^\brac \cdot q^\brac)\cdot r^\brac)^\brac$ is derivable in $\LC^\brac$:
		$$
		\infer[(\mathrm{cut})]{
			r,q,p \to (p^\brac \cdot q^\brac)\cdot r^\brac)^\brac
		}{
			\infer[]{
				r,q,p \to (p^\brac \cdot q^\brac)\cdot r^\brac)^\rev
			}
			{
				\infer[(\to\cdot)]{
					p^\rev,q^\rev,r^\rev \to (p^\brac \cdot q^\brac)\cdot r^\brac
				}
				{
					\infer[(\to\cdot)]{
						p^\rev,q^\rev \to p^\brac \cdot q^\brac
					}{
						p^\rev \to p^\brac & q^\rev \to q^\brac
					}
					&
					r^\rev \to r^\brac
				}
			}
			&
			(p^\brac \cdot q^\brac)\cdot r^\brac)^\rev \to (p^\brac \cdot q^\brac)\cdot r^\brac)^\brac
		}
		$$
		A transition without rule markers consists of one application of $(^\rev\to^\rev)$ and of three applications of $(^\rev\to)$. This sequent is not derivable without the cut rule.
	\end{example}
	Now, let us define 
		\begin{itemize}
			\item $tr(A^\brac) \eqdef \neck_{\mathrm{H}}(tr(A))$;
			\item $tr(A^\rev) \eqdef \times\left(\vcenter{\hbox{{\tikz[baseline=.1ex]{
							\node[node,label=below:{\scriptsize $(1)$}] (N1) {};
							\node[node,right=10mm of N1,label=below:{\scriptsize $(2)$}] (N2) {};
							\draw[<-,black] (N1) -- node[above] {$tr(A)$} (N2);
			}}}}
			\right)=\langle \{v_0,v_1\}, \{e_0\}, att,lab,v_1v_0 \rangle$ where $att(e_0)=v_0v_1$, $lab(e_0)=tr(A)$.
		\end{itemize}
	We would expect that $\LC^\brac \vdash A_1,\dotsc, A_n \to B$ if and only if $\mathrm{HL} \vdash \SG(tr(A_1) \dotsc tr(A_n)) \to tr(B)$. However, we failed to prove this because of the following: in a derivation of $\SG(tr(A_1) \dotsc tr(A_n)) \to tr(B)$, there can appear sequents with antecedents of complex structure, which behaviour is hard to describe; the problem arises in derivations similar to that in Example \ref{ex_two_bad_cases} where an isolated node appear. Consideration of specific derivations showed us that a derivation involving ``bad'' antecedents can be made simpler. Hence, we claim that the above statement is true, but proving it remains an open question. Nevertheless, we can avoid these difficulties by ad hoc forbidding isolated nodes. Namely, let us consider a variant of the hypergraph Lambek calculus, in which all hypergraphs must be without isolated nodes (either ones occuring within types or antecedents of sequents), and all the remaining is the same; denote this calculus as $\mathrm{HL}^{WI}$. Then
	\begin{theorem}\label{th_embed}
		$\LC^\brac \vdash A_1,\dotsc, A_n \to B$ if and only if $\mathrm{HL}^{WI} \vdash \SG(tr(A_1) \dotsc tr(A_n)) \to tr(B)$.
	\end{theorem}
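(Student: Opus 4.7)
The plan is to prove the two directions separately, relying on a cut-elimination result for $\mathrm{HL}^{WI}$ (which should go through exactly as for $\mathrm{HL}$ in \cite{Pshenitsyn21}, since every rule of $\mathrm{HL}$ preserves the ``no isolated node'' invariant). Throughout, I would piggyback on the known embedding of $\LC^\rev$ into $\mathrm{HL}$ from \cite{Pshenitsyn21}, so that only the genuinely new features, namely the bracelet axiom and the $\neck_{\mathrm{H}}$-construction, require fresh work.

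\textbf{Soundness (left to right).} I would induct on a cut-free $\LC^\brac$-derivation of $\Pi \to B$. The axiom and all Lambek and reversal rules are dispatched by the existing translation. For the three rules of $\neck$ that were promised to be verified just before Definition of $\neck_{\mathrm{H}}$, I would give explicit $\mathrm{HL}^{WI}$-derivations once and for all: $\SG(tr(\Pi)) \to \neck_{\mathrm{H}}(tr(A))$ from $\SG(tr(\Pi)) \to tr(A)$ by first closing the string graph into a cycle with $(\to\times)$ using $Loop(tr(A))$ and then reopening it at $\$_2$ by $(\to\div)$ using $Loop(\$_2)$, which is exactly the model derivation displayed before the definition of $\neck_{\mathrm{H}}$; the cyclic-shift admissibility by running the same $(\to\div)$ step but cutting the cycle at a different position, using that the cycle obtained by fusing external nodes of a string graph does not depend on where the string is cut; and the succedent-introduction rule $(^\brac\to)$ by combining the previous two with cut-admissibility. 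For the new axiom $(\mathrm{Ax}^{\rev\brac})$ the derivation of $tr(A^\rev) \to \neck_{\mathrm{H}}(tr(A))$ follows because closing a single labelled edge into a self-loop produces the same $Loop(tr(A))$ regardless of the edge's orientation.

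\textbf{Completeness (right to left).} Assume a cut-free derivation $\mathcal{D}$ of $\SG(tr(A_1)\dotsc tr(A_n)) \to tr(B)$ in $\mathrm{HL}^{WI}$. I would induct on $\mathcal{D}$ while maintaining the invariant that every antecedent occurring in $\mathcal{D}$ is (isomorphic to) the string graph of some sequence of translated types, where individual edges may carry either orientation (a reversed edge being read as an extra $^\rev$ around its label under $tr$). The crux is a structural lemma classifying how a cycle can appear inside $\mathcal{D}$: because $\mathrm{HL}^{WI}$ forbids isolated nodes, the only way a $Loop(\$_2)$ in the succedent can be unpacked by $(\to\div)$ is to reopen the cycle into a string graph that is a cyclic shift of the original sequence, possibly with its overall orientation reversed. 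The shift case is simulated in $\LC^\brac$ by $(\brac)$ together with $(\to^\brac)$; the reverse case is simulated by $(\mathrm{Ax}^{\rev\brac})$ together with the reversal rules of $\LC^\rev$ and a $(\mathrm{cut})$ (which $\LC^\brac$ retains as a primitive). All remaining $\mathrm{HL}^{WI}$-rules match $\LC^\rev$-rules under the standard inverse translation.

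The hard part will be the structural lemma in the completeness direction. One has to show that under the $WI$-restriction every $(\to\div)$ application whose divisor is $Loop(\$_2)$ produces an antecedent that is again a (possibly partially reversed) string graph, and that the resulting orientation pattern matches exactly one of the two admissible cases. This is exactly the point at which Example \ref{ex_two_bad_cases} (left case) is ruled out, and it is the reason the theorem is stated for $\mathrm{HL}^{WI}$ rather than $\mathrm{HL}$; the analogous statement for the full $\mathrm{HL}$ is explicitly left open. Keeping track simultaneously of which node identifications originate from $(\to\times)$ and which from $(\to\div)$, and of the orientations of all intermediate edges, is the main bookkeeping burden; once this is done, translating each rule application of $\mathcal{D}$ back into $\LC^\brac$ is routine.
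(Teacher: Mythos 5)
Your left-to-right direction rests on a false premise: you propose to induct on a ``cut-free $\LC^\brac$-derivation'', but $\LC^\brac$ does not admit cut elimination. The paper states this explicitly right after introducing the calculus, and the very sequent that motivates the bracelet operation, $r,q,p \to ((p^\brac\cdot q^\brac)\cdot r^\brac)^\brac$, is derivable only with $(\mathrm{cut})$; for such sequents your induction never gets started. The repair is to induct on arbitrary $\LC^\brac$-derivations and to translate each application of $(\mathrm{cut})$ using cut-admissibility of $\mathrm{HL}^{WI}$ (Theorem \ref{th_cut_hlwi}) together with the observation that replacement of a hyperedge preserves the absence of isolated nodes. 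This is precisely the ingredient the paper singles out in its (very brief) proof sketch, whereas in your proposal cut-admissibility appears only incidentally, in the treatment of $(^\brac\to)$. Your verifications of the three $\neck_{\mathrm{H}}$-rules and of $(\mathrm{Ax}^{\rev\brac})$ are otherwise in line with what the paper intends.

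In the right-to-left direction, the invariant you propose --- every antecedent in $\mathcal{D}$ is a string graph whose edges may be individually reversed --- is too strong and is contradicted by derivations you yourself need: the premise of the $(\to\div)$ step that introduces $\neck_{\mathrm{H}}(tr(A))$ has a closed cycle as its antecedent (this is the displayed model derivation preceding the definition of $\neck_{\mathrm{H}}$, and the paper explicitly warns that antecedents that are not string graphs, such as cycles and inverted strings, do occur). The invariant class must therefore include at least closed chains, and your structural lemma must classify the ways such a cycle can be reopened under the no-isolated-nodes restriction (a cyclic shift, or a cyclic shift of the reversal); you gesture at this, but as stated your induction hypothesis already fails one step above that $(\to\div)$ application. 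Finally, note that the paper itself only asserts the proof is ``technical but straightforward'' and does not write it out, so the comparison here is against its stated key ingredients rather than against a full argument.
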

	
	The proof is technical but is straightforward. There we use the fact that for $\mathrm{HL}^{WI}$ the cut rule is admissible (see \ref{app_cut_hlwi}); it is also important to observe that the replacement of a hyperedge $e$ of a hypergraph $G$ by a hypergraph $H$ where $G$ and $H$ do not have isolated nodes, yields a hypergraph without isolated nodes as well. A nice consequence is that, while the cut rule cannot be eliminated in $\LC^\brac$ itself, it can be eliminated in $\mathrm{HL}^{WI}$ (and, particularly, in its fragment corresponding to $\LC^\brac$). However, this does not mean that we obtained a cut-free variant of $\LC^\brac$ for free: in a derivation of a sequent in $\mathrm{HL}^{WI}$, there can appear hypergraphs in antecedents that are not string ones (cycles, inverted strings etc.). 
	
	Regarding the formal language semantics of $A^\brac$, it is expected to be the following: $w(A^\brac)\eqdef w(A)^\neck\cup w(A)^{\neck\rev}$ (closure under both cyclic shifts and reversal). Syntactically, we can define the bracelet operation through the cyclic shift, the reversal, and the disjunction: $A^\brac \eqdef A^\neck \vee A^{\neck\rev}$, and then develop the Lambek calculus with $^\neck$, $^\rev$, and $\vee$ (this is out of scope of this paper). A curious open question is whether we can express $A^\brac$ through $\backslash$, $/$, $\cdot$, $^\neck$, and $^\rev$ (without the disjunction) at the level of semantics. 
	
	Finally, note that all the grammar results for $\LC^\neck$ from Section \ref{sec_grammars} can be directly transferred to $\LC^\brac$.
	\section{Conclusion}\label{sec_concl}
	The Lambek calculus with the cyclic shift operation $\LC^\neck$ introduced in this paper is in line with already considered and studied formalisms like $\LC^\rev$. The $^\neck$ operation does not seemingly have linguistic applications (though, as a joke, it can be pointed out that cycles appearing in the hypergraph calculus remind of the language of heptapods from the ``Arrival'' film released in 2016); however, this operation is interesting to study because cyclic shift of formal languages is a well-known operation studied in several works. In the paper, we have proved the cut elimination theorem, have studied recognizing power of grammars (in particular, we showed how to eliminate $A^\neck$ from antecedents), and have studied the possibility of embedding $\LC^\neck$ in the hypergraph Lambek calculus, which resulted in defining the bracelet operation $\LC^\brac$.
	
	The main open question is, of course, completeness w.r.t. the formal language semantics. Unforunately, a simple proof from \cite{Buszkowski82} does not work neither for $\LC^\neck$ nor for $\LC^\brac$, so we need to use more complex techniques. It would be also interesting to continue the study of recognizing power of grammars and to understand bounds of non-context-freeness of languages generated by $\LC^\neck$ better.
	
\bibliographystyle{plain}
\bibliography{NCL_Cyclic_shift_Preprint}

	\newpage
	\appendixpage
	\appendixtocoff
	\appendix
	
	\section{Proof of Lemma \ref{lemma_perm_simple}}\label{app_proof_lemma_perm_simple}
		\begin{proof}
			Induction on $n$. The base case is trivial. To prove the induction step, consider the derivation of $p_1/q_1,\dotsc, p_{n-1}/q_{n-1},p_n \to s$. Without loss of generality its two last steps can be represented as follows:
			$$
			\infer[(\CS)]{
				p_1/q_1,\dotsc, p_{n-1}/q_{n-1},p_n \to s
			}
			{
				\infer[(/\to)]{
					p_{i+1}/q_{i+1},\dotsc, p_{n-1}/q_{n-1},p_n, p_1/q_1,\dotsc, p_i/q_i \to s
				}
				{
					p_{i+1}/q_{i+1},\dotsc, p_{j-1}/q_{j-1}, p_j \to s
					&
					p_{j+1}/q_{j+1}, \dotsc, p_{n-1}/q_{n-1},p_n, p_1/q_1,\dotsc, p_i/q_i \to q_j
				}
			}
			$$
			Here $j>i\ge 0$ (if $i=0$, then the $(\CS)$ rule is not applied). We exploit the fact that each sequent within the derivation must contain exactly one type of the form $p_k$ in an antecedent (the proof is again by induction), and also the fact that $\Pi,\Psi\to A$ is derivable if and only if $\Psi,\Pi \to A$ is derivable. 
			
			Now, we apply the induction hypothesis and obtain a permutation $\sigma_1$ on $\{i+1,\dotsc,j-1\}$ and a permutation $\sigma_2$ on $\{1,\dotsc, i,j+1,\dotsc,n-1\}$ such that the conclusion of the lemma holds for the two abovestanding sequents in the fragment of the derivation. In order not to be drown in notations, let us denote by $\overline{p_{i+1}/q_{i+1},\dotsc, p_{j-1}/q_{j-1}}, p_j \to s$ the result of permuting types in the first sequent according to $\sigma_1$, and let us denote by $\underline{p_1/q_1,\dotsc, p_i/q_i,p_{j+1}/q_{j+1}, \dotsc, p_{n-1}/q_{n-1}},p_n \to q_j$ the result of permuting types in the second sequent according to $\sigma_2$. Now we can combine these sequents as follows:
			$$
			\overline{p_{i+1}/q_{i+1},\dotsc, p_{j-1}/q_{j-1}}, p_j/q_j, \underline{p_1/q_1,\dotsc, p_i/q_i,p_{j+1}/q_{j+1}, \dotsc, p_{n-1}/q_{n-1}},p_n \to s
			$$
			This sequent is obtained by permuting types $p_i/q_i$, $i=1,\dotsc,n-1$ of the initial sequent, and it is not hard to check that it satisfies the conditions of the lemma. 
		\end{proof}
	\section{Proof of Theorem \ref{th_even_calculus}}\label{app_proof_th_even_calculus}
	\begin{lemma}\label{lemma_to_box}\leavevmode
		\begin{itemize}
			\item $\LC^\neck \vdash  A \to A^\Box$.
			\item If $\LC^\neck \vdash A \to B$, then $\LC^\neck \vdash A^\Box \to B^\Box$.
		\end{itemize}
	\end{lemma}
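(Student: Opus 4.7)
Both statements are pure Lambek-calculus derivations; no $^\neck$-specific rules are needed, and neither $l$ nor $r$ is interacted with beyond the axioms on them.

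For the first bullet, I unfold $A^\Box = (l \backslash ((l\cdot A)\cdot r))/r$ from the outside in. Applying $(\to/)$ reduces the goal $A \to A^\Box$ to $A, r \to l \backslash ((l\cdot A)\cdot r)$; then $(\to\backslash)$ reduces it to $l, A, r \to (l\cdot A)\cdot r$. Two applications of $(\to\cdot)$, splitting the antecedent as $(l,A)$ and $(r)$ and then $(l)$ and $(A)$, finish the derivation using only the axioms $l\to l$, $A\to A$, $r\to r$.

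For the second bullet, I again peel off the outermost $/$ and $\backslash$ in the succedent: $A^\Box \to B^\Box$ reduces by $(\to/)$ and $(\to\backslash)$ to $l,\; (l \backslash ((l\cdot A)\cdot r))/r,\; r \to (l\cdot B)\cdot r$. Next I eliminate the complex antecedent type by applying $(/\to)$ with the right premise $r\to r$, and then $(\backslash\to)$ with the right premise $l\to l$; after these two steps the goal is $(l\cdot A)\cdot r \to (l\cdot B)\cdot r$. Two applications of $(\cdot\to)$ unfold the antecedent into $l, A, r$, and then two applications of $(\to\cdot)$ reduce the goal to three sequents $l\to l$, $A\to B$, $r\to r$. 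The first and third are axioms, while the second is exactly the hypothesis.

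There is no real obstacle here: both derivations are mechanical unfoldings of the connectives of $A^\Box$. The only thing worth verifying carefully is that the fresh primitive types $l$, $r$ can be introduced and discharged by axioms without interfering with the remainder of the derivation, and this is automatic because $l$ and $r$ were chosen not to occur elsewhere.
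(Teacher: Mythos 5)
Your proof is correct. The paper states Lemma~\ref{lemma_to_box} without proof, treating it as routine, and your derivations are exactly the intended argument: both sequents follow by mechanically unfolding the connectives of $A^\Box$ using only the rules of $\LC$ (which is a subsystem of $\LC^\neck$), with the second bullet using the hypothesis $A\to B$ in place of the axiom $A\to A$.
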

	
	\begin{lemma}\label{lemma_double_rules}
		Consider a modification of $\LC^\neck$ that includes the following rules:
		$$
		\infer[(/\to)_2]{\Gamma,\Pi,(D\backslash E)/A,\Psi,\Delta \to C}{\Gamma, E, \Delta \to C & \Pi \to D & \Psi \to A}
		\qquad
		\infer[(\to\cdot)_2]{\Pi,\Psi,\Xi \to (D \cdot E)\cdot B}{\Pi \to D & \Psi \to E & \Xi \to B}
		$$
		$$
		\infer[(\to/)_2]{\Pi \to (D \backslash E)/A}{D , \Pi , A \to E}
		\qquad
		\infer[(\to\cdot)_2]{\Gamma,(D \cdot E)\cdot B,\Delta \to C}{\Gamma,D, E,B,\Delta \to C}
		$$
		In this modification (say $\LC^\neck_2$), we additionally forbid 
		\begin{itemize}
			\item to apply the rule $(/\to)$, if $B$ is of the form $D\backslash E$ ($B$ here is that from Section \ref{sec_intr} where the rule $(/\to)$ was firstly introduced),
			\item to apply the rule $(\to\cdot)$, if $A$ is of the form $D\cdot E$ ($A$ here is that from Section \ref{sec_intr} where the rule $(\to\cdot)$ was firstly introduced),
			\item to apply the rule $(\to/)$, if $B$ is of the form $D\backslash E$ ($A$ here is that from Section \ref{sec_intr} where the rule $(\to/)$ was firstly introduced),
			\item to apply the rule $(\cdot\to)$, if $A$ is of the form $D\cdot E$ ($A$ here is that from Section \ref{sec_intr} where the rule $(\cdot\to)$ was firstly introduced).
		\end{itemize}. Then a sequent is derivable in $\LC^\neck$ if and only if it is derivable in $\LC^\neck_2$.
	\end{lemma}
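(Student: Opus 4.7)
The plan is to prove the two directions separately. The direction $\LC^\neck_2 \vdash S \Rightarrow \LC^\neck \vdash S$ is immediate, as each new rule of $\LC^\neck_2$ is simulated by two consecutive applications of the original rules; for instance, $(/\to)_2$ arises as a $(\backslash\to)$ followed by a $(/\to)$, while $(\to/)_2$ arises as a $(\to/)$ after a $(\to\backslash)$. For the converse $\LC^\neck \vdash S \Rightarrow \LC^\neck_2 \vdash S$, I would proceed by induction on a cut-free derivation in $\LC^\neck$ (justified by the cut-elimination theorem). If the last rule is not one of the four forbidden patterns, we apply the induction hypothesis to the premises and reapply the same rule in $\LC^\neck_2$.

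For three of the four forbidden cases, the transformation rests on invertibility or splitting. If the last rule is the forbidden $(\to/)$ with $B = D\backslash E$, or the forbidden $(\cdot\to)$ with $A = D\cdot E$, I would appeal to the invertibility of $(\to\backslash)$ and $(\cdot\to)$ in $\LC^\neck$ to refine the single premise into the fully-decomposed form required by $(\to/)_2$ or $(\cdot\to)_2$, then apply the induction hypothesis and conclude with the combined rule. The forbidden $(\to\cdot)$ with $A = D\cdot E$ requires a splitting lemma: in cut-free $\LC^\neck$, $\LC^\neck \vdash \Pi \to D\cdot E$ entails a decomposition $\Pi = \Pi_1,\Pi_2$ with $\LC^\neck \vdash \Pi_1 \to D$ and $\LC^\neck \vdash \Pi_2 \to E$ (after normalizing to atomic axioms, so the degenerate case $D\cdot E \to D\cdot E$ is expanded via $(\cdot\to)$ and $(\to\cdot)$). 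This splitting is proved by a secondary induction on the derivation, exploiting the fact that the cyclic-shift rules $(\to^\neck), (^\neck\to), (\neck)$ all require succedents of the form $A^\neck$ and therefore cannot be last-rules producing the succedent $D\cdot E$.

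The most delicate case is the forbidden $(/\to)$ applied to $(D\backslash E)/A$. The application produces $D\backslash E$ in the antecedent of the first premise $\Gamma, D\backslash E, \Delta \to C$, while $(/\to)_2$ demands that this antecedent already appear as $\Gamma, E, \Delta$ together with a separate derivation of some $\Pi_D \to D$. My plan is to argue that in any cut-free $\LC^\neck$ derivation of $\Gamma, D\backslash E, \Delta \to C$, the formula $D\backslash E$ must eventually become the principal formula of some $(\backslash\to)$ application (with the axiom subcase $D\backslash E \to D\backslash E$ again handled by atomic-axiom expansion), and that this $(\backslash\to)$ can be permuted down to become adjacent to the $(/\to)$, at which point the two applications collapse into a single $(/\to)_2$.

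The main obstacle I expect is precisely this rule-permutation argument: the rules interleaved between $(/\to)$ and the eventual $(\backslash\to)$ on $D\backslash E$ can include $(\neck)$, whose cyclic reordering of antecedent types interacts nontrivially with attempts to commute rules past it. A fallback, should direct permutation prove unwieldy, is to first establish cut admissibility for $\LC^\neck_2$ and then simulate the forbidden $(/\to)$ by a cut between the premise $\Gamma, D\backslash E, \Delta \to C$ (already in $\LC^\neck_2$ by induction) and an $\LC^\neck_2$-derivation of $(D\backslash E)/A, \Pi \to D\backslash E$, the latter being obtainable from the second premise $\Pi \to A$ via $(\to\backslash)$ and $(/\to)_2$ together with atomic axioms supplying $D$ and $E$.
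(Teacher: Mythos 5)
Your easy direction and the treatment of the forbidden $(\to/)$ and $(\cdot\to)$ cases via invertibility are fine, but the splitting lemma you invoke for the forbidden $(\to\cdot)$ is false, and not only in the degenerate axiom case you flag. Take the forbidden application with premise $a,\, b\cdot c,\, d \to (a\cdot b)\cdot(c\cdot d)$ (derivable: decompose $b\cdot c$ by $(\cdot\to)$ and then split $a,b,c,d$). Here $\Pi = a,\, b\cdot c,\, d$ admits no decomposition $\Pi_1,\Pi_2$ with $\Pi_1 \to a\cdot b$ and $\Pi_2 \to c\cdot d$, because the type $b\cdot c$ straddles the only possible split point; your secondary induction breaks exactly at the $(\cdot\to)$ case when the decomposed product crosses the split. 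The target sequent is still derivable in $\LC^\neck_2$ (apply $(\to\cdot)_2$ to $a,b,c,d,\Xi$ first and contract $b,c$ into $b\cdot c$ afterwards), so the lemma is not in danger, but your route to it does not go through. Likewise, for the forbidden $(/\to)$ you correctly identify the permutation of $(\backslash\to)$ past intervening rules (including $(\neck)$) as the crux, but you leave it unresolved; the fallback via cut admissibility for $\LC^\neck_2$ would itself require a separate, nontrivial proof that you do not supply.

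The paper handles all four cases by a single uniform device that sidesteps both problems: instead of splitting or permuting the \emph{inner} rule downward, it traces the derivation upward to the point where the inner connective was actually introduced (the $(\backslash\to)$ creating $D\backslash E$, or the $(\to\cdot)$ creating $D\cdot E$ in the succedent), merges the outer rule application into that point as the combined rule $(/\to)_2$ or $(\to\cdot)_2$, and then replays the intermediate segment of the derivation verbatim, with $D\backslash E$ replaced throughout by the block $(D\backslash E)/A,\Delta'$ (respectively, with $\Delta \to B$ carried along in the succedent). Since the intermediate rules treat the relevant occurrence purely as context, the replay is automatically correct — this is precisely what rescues the $a,\,b\cdot c,\,d$ example above, where the outer $(\to\cdot)$ is carried up past the $(\cdot\to)$ to meet the inner $(\to\cdot)$. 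You should replace the splitting lemma by this upward-merging argument (or prove the corresponding rule-permutation statements), and do the same for the $(/\to)$ case rather than relying on an unproven cut admissibility for $\LC^\neck_2$.
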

	This lemma states that, if we have a type with two divisions in an antecedent, we can remodel a derivation in such a way that the rule applications of $(\backslash\to)$ and $(/\to)$, after which this type appesrs, are done consecutively. Analogously, if we have a type with two products in a succeedent, then we can consecutively apply $(\to\cdot)$ two times for it to appear. The proof is done by a straightforward remodelling of a derivation of a sequent (one of the applications of rules should be moved to the other one).
		\begin{proof}[Proof sketch.]
			It is obvious that every sequent derivable in $\LC^\neck_1$ is derivable in $\LC^\neck$ (new rules just represent some combination of rules $(\backslash\to)$, $(/\to)$, and $(\to \cdot)$). The converse statement is proved by induction on the length of a derivation of a sequent in $\LC^\neck$. It suffices to consider only cases when a forbidden rule is applied the first time, and to show how to eliminate it.
			
			\textbf{Case 1.} The last rule in the derivation is $(/\to)$, after which a type of the form $(D\backslash E)/A$ appears in the antecedent:
			$$
			\infer[(/\to)]{\Gamma, (D\backslash E)/A, \Delta^\prime, \Delta^{\prime\prime} \to C}
			{
				\infer[]{\Gamma, D\backslash E , \Delta^{\prime\prime} \to C}
				{
					\infer[]{\dotsc\dotsc}
					{
						\infer[(\backslash\to)]{
							\widetilde{\Gamma}, \widetilde{\Pi}, D\backslash E , \widetilde{\Delta} \to \widetilde{C}
						}
						{
							\widetilde{\Gamma}, E , \widetilde{\Delta} \to \widetilde{C}
							&
							\widetilde{\Pi} \to D
						}
					}
				}
				&
				\Delta^\prime \to A
			}
			$$
			This derivation is remodeled into the following one:
			$$
			\infer[]{\Gamma, \boxed{(D\backslash E)/A, \Delta^\prime}, \Delta^{\prime\prime} \to C}
			{
				\infer[]{\dotsc\dotsc}
				{
					\infer[(/\to)]{\widetilde{\Gamma}, \widetilde{\Pi}, \boxed{(D\backslash E)/A, \Delta^\prime}, \widetilde{\Delta} \to \widetilde{C}}
					{
						\infer[(\backslash\to)]{
							\widetilde{\Gamma}, \widetilde{\Pi}, D\backslash E , \widetilde{\Delta} \to \widetilde{C}
						}
						{
							\widetilde{\Gamma}, E , \widetilde{\Delta} \to \widetilde{C}
							&
							\widetilde{\Pi} \to D
						}
						&
						\Delta^\prime \to A
					}
				}
			}
			$$
			A part of the antecedent is boxed; this means only that we regard it as a whole, and treat it similarly to $D\backslash E$ in the initial derivation; hence, we repeat the part of the derivation represented by ``$\dotsc\dotsc$'' replacing $D\backslash E$ by $(D\backslash E)/A,\Delta^\prime$ everywhere. Finally, we replace the applications of $(\backslash\to)$ and $(/\to)$ as follows:
			$$
			\infer[(/\to)_2]{\widetilde{\Gamma}, \widetilde{\Pi}, (D\backslash E)/A, \Delta^\prime, \widetilde{\Delta} \to \widetilde{C}}
			{
				\widetilde{\Gamma}, E , \widetilde{\Delta} \to \widetilde{C}
				&
				\widetilde{\Pi} \to D
				&
				\Delta^\prime \to A
			}
			$$
			
			\textbf{Case 2.} The last rule is $(\to\cdot)$, after which a type of the for $(D \cdot E)\cdot B$ appears. This case is proved similarly; we remodel the derivation as follows:
			$$
			\infer[(\to\cdot)]{\Gamma,\Delta \to (D\cdot E)\cdot B}
			{
				\infer[]{\Gamma \to D \cdot E}
				{
					\infer[]{\dotsc\dotsc}
					{
						\infer[(\to\cdot)]{
							\Gamma^\prime, \Gamma^{\prime\prime} \to D\cdot E
						}
						{
							\Gamma^\prime \to D
							&
							\Gamma^{\prime\prime} \to E
						}
					}
				}
				& 
				\Delta \to B
			}
			\quad\rightsquigarrow\quad
			\infer[]{\Gamma, \Delta \to (D \cdot E) \cdot B}
			{
				\infer[]{\dotsc\dotsc}
				{
					\infer[(\to\cdot )_2]{\Gamma^\prime, \Gamma^{\prime\prime},\Delta \to (D\cdot E)\cdot B}
					{
						\Gamma^\prime \to D
						&
						\Gamma^{\prime\prime} \to E
						&
						\Delta \to B
					}
				}
			}
			$$
			The part of the derivation denoted by ``$\dotsc\dotsc$'' is simply repeated. This can be done correctly because this part can consist only of applications of $(\backslash\to)$, $(/\to)$, and $(\cdot\to)$, and it acts only within $\Gamma$ not affecting the succeedent. 
			
			\textbf{Case 3.} If the last rule in the derivation is $(\to/)$, after which a type of the form $(D\backslash E)/A$ appears in the succeedent, then we remodel the derivation as follows:
			$$
			\infer[(\to/)]{\Pi \to (D\backslash E)/A}
			{
				\infer[]{\Pi,A \to D\backslash E}
				{
					\infer[]{\dotsc\dotsc}
					{
						\infer[(\to\backslash)]{
							\Pi^\prime \to D\backslash E
						}
						{
							D, \Pi^\prime \to E
						}
					}
				}
				&
				\Delta^\prime \to A
			}
			\qquad 
			\rightsquigarrow
			\qquad
			\infer[]{\Pi \to (D\backslash E)/A}
			{
				\infer[]{E, \Pi, A \to E}
				{
					\infer[(/\to)]{\dotsc\dotsc}
					{
						D, \Pi^\prime \to E
					}
				}
			}
			$$
			
			\textbf{Case 4.} If the last rule is $(\cdot\to)$, after which a type of the form $(D \cdot E)\cdot B$ appears in the antecedent,then we remodel the derivation as follows:
			$$
			\infer[(\cdot\to)]{\Gamma,(D\cdot E)\cdot B,\Delta \to C}
			{
				\infer[]{\Gamma,D\cdot E, B,\Delta \to C}
				{
					\infer[]{\dotsc\dotsc}
					{
						\infer[(\cdot\to)]{
							\widetilde{\Gamma},D\cdot E, \widetilde{\Delta} \to \widetilde{C}
						}
						{
							\widetilde{\Gamma},D, E, \widetilde{\Delta} \to \widetilde{C}
						}
					}
				}
			}
			\quad\rightsquigarrow\quad
			\infer[]{\Gamma,(D\cdot E)\cdot B,\Delta \to C}
			{
				\infer[]{\Gamma,D, E, B,\Delta \to C}
				{
					\infer[]{\dotsc\dotsc}
					{
						\widetilde{\Gamma},D, E, \widetilde{\Delta} \to \widetilde{C}
					}
				}
			}
			$$
		\end{proof}
	
	\begin{proof}[Proof of Theorem \ref{th_even_calculus}.]
		Firstly, let us prove two statements:
		\begin{enumerate}
			\item For $A_1,\dotsc,A_n,B$ from $Tp^\neck_{\le N}$ $\LC^\neck \vdash A_1, \dotsc, A_n\to B$ implies $\LC^\neck \vdash o_N(A_1), \dotsc, o_N(A_n)\to e_N(B)$.
			\item $\LC^\neck \vdash A \to B^\neck$ for $A \in Tp^\neck_{\le k}$ and $B\in Tp^\neck_{\le N}$ where $k \le N$ implies $\LC^\neck \vdash o_N(A) \to e_N(B)^{(\Box \neck)^k}$.
		\end{enumerate}. These statements are proved together by induction on the size of a derivation of $A_1, \dotsc, A_n\to B$ or $A \to B^\neck$. The base case is trivial, if we restrict the axiom $(Ax)$ to $p \to p$ for $p$ being primitive (this does not change the set of sequents derivable in $\LC^\neck$). The induction step depends on the last rule applied in the derivation. It is not hard to notice that, if the last rule belongs to $\LC$, then we can simply apply the induction hypothesis for premises, and apply the same rule obtaining a sequent of interest. For instance, let the last rule in the derivation of $A_1, \dotsc, A_n\to B$ be $(\to \cdot)$; then $B=B_1\cdot B_2$, and
		$$
		\infer[(\to \cdot)]{A_1, \dotsc, A_n \to B_1\cdot B_2}{A_1, \dotsc, A_i \to B_1 & A_{i+1}, \dotsc, A_n \to B_2}
		$$
		Then we apply the induction hypothesis to premises and construct the following derivation:
		$$
		\infer[(\to \cdot)]{o_N(A_1), \dotsc, o_N(A_n) \to e_N(B_1)\cdot e_N(B_2)}{o_N(A_1), \dotsc, o_N(A_i) \to e_N(B_1) & o_N(A_{i+1}), \dotsc, o_N(A_n) \to e_N(B_2)}
		$$
		Finally, notice that $e_N(B_1)\cdot e_N(B_2)=e_N(B_1\cdot B_2)$. Other rules of $\LC$ are dealt with similarly.
		
		Let the last rule applied in the derivation of $A_1, \dotsc, A_n\to B$ be $(\to^\neck)$. Then $B=C^\neck$, and
		$$
		\infer[(\to^\neck)]{A_1,\dotsc,A_n \to C^\neck}{A_1,\dotsc,A_n \to C}
		$$
		We apply the induction hypothesis for the premise and to the following:
		$$
		\infer[(\to^\neck)]{o_N(A_1),\dotsc, o_N(A_n) \to e_N(C)}{o_N(A_1),\dotsc, o_N(A_n) \to e_N(C) & e_N(C) \to (e_N(C))^{(\Box\neck)^N}}
		$$
		$e_N(C) \to (e_N(C))^{\Box\neck}$ is derivable using the rule $(\to\neck)$ and Lemma \ref{lemma_to_box}; we can repeat this $N$ times and obtain $e_N(C) \to (e_N(C))^{(\Box\neck)^N}$. Finally, notice that $(e_N(C))^{(\Box\neck)^N}=e_N(C^\neck)=e_N(B)$.
		
		Let the last rule applied be $(\neck)$; then $B=C^\neck$, and $e_N(B)=(e_N(C))^{(\Box \neck)^N}$. Hence the last operation in $e_N(B)$ is $^\neck$, and we can apply $(\neck)$.
		
		If the last rule applied to $A_1, \dotsc, A_n\to B$ is $(^\neck\to)$, then this is a particular case of the second statement since $A_1=A$ belongs to $Tp^\neck_{\le N}$.
		
		Now we turn to the secons statement. As earlier, if the last rule applied in a derivation of $A \to B^\neck$ is that from $\LC$, then we can simply repeat it. Rules $(\to^\neck)$ and $(\neck)$ are treated as above. Let the last rule applied be $(^\neck\to)$. Then $A=C^\neck$, and the rule is of the form
		$$
		\infer[(^\neck\to )]{C^\neck \to B^\neck}{C \to B^\neck}
		$$
		The size of $C$ is one less than that of $A$, hence $C \in Tp^\neck_{\le k-1}$. we apply the induction hypothesis and derive $o_N(C) \to (e_N(B))^{(\Box\neck)^{k-1}}$. Then we apply Lemma \ref{lemma_to_box} and do the following:
		$$
		\infer[(\to^\neck)]{
			o_N(C)^\Box \to (e_N(B))^{(\Box\neck)^{k}}
		}{
			\infer[]{
				o_N(C)^\Box \to (e_N(B))^{(\Box\neck)^{k-1}\Box}
			}{
			o_N(C) \to (e_N(B))^{(\Box\neck)^{k-1}}
			}
		}
		$$
		Finally, notice that $o_N(C)^\Box = o_N(C^\neck)$. This completes the proof the two statements formulated in the beginning of the proof, and consequently the proof of the ``only if'' part of the theorem.
		
		Let us prove the converse statement: for $A_1,\dotsc,A_n,B$ from $Tp^\neck_{\le N}$ $\LC^\neck \vdash o_N(A_1), \dotsc, o_N(A_n)\to e_N(B)$ implies $\LC^\neck \vdash A_1, \dotsc, A_n\to B$. First of all, let $A_i^\prime$ be obtained from $o_N(A_i)$ by replacing each $^\Box$ connective by $^\neck$, and let $B^\prime$ obtained from $e_N(B)$ similarly. Then $\LC^\neck \vdash A^\prime_1, \dotsc, A^\prime_n\to B^\prime$ implies $\LC^\neck \vdash A_1, \dotsc, A_n\to B$ because the difference between types in these sequents is the number of $^\neck$ operations; however, $A^\neck \leftrightarrow A^{\neck\neck}$, so we can replace one cyclic shift operation by an arbitrary positive number of them. Therefore, it suffices to prove that $\LC^\neck \vdash o_N(A_1), \dotsc, o_N(A_n)\to e_N(B)$ implies $\LC^\neck \vdash A^\prime_1, \dotsc, A^\prime_n\to B^\prime$.
		
		Let us denote by $Tp^{\neck,\Box}$ the set of types built from primitive ones (not including $l,r$) using operations $\backslash$, $\cdot$, $/$, $\neck$, and $\Box$. 
		
		Consider a cut-free derivation of the sequent $o_N(A_1), \dotsc, o_N(A_n)\to e_N(B)$ in the calculus $\LC^\neck_2$ (it is defined in Lemma \ref{lemma_double_rules}). We claim that, if we replace all $^\Box$ in this derivation by $^\neck$, then it remains correct. Firstly, let us show that only three kinds of sequents may appear within this derivation:
		\begin{enumerate}
			\item A sequent of the form $C_1, \dotsc, C_m\to D$ where $C_i, D \in Tp^{\neck,\Box}$;
			\item A sequent of the form $l, C_1, \dotsc, C_m, r \to (l \cdot D)\cdot r$ where $C_i, D \in Tp^{\neck,\Box}$;
			\item A sequent of the form $(l \cdot C)\cdot r \to (l \cdot D)\cdot r$ where $C, D \in Tp^{\neck,\Box}$;
			\item $l \to l$ and $r \to r$.
		\end{enumerate}
		This is proved by induction on the length of a derivation. The axiom case belongs either to kind 1 or to kind 4. To prove the induction step, we need the following
		\begin{lemma}
			If there is a sequent of the form $\Pi \to l$ ($\Pi \to r$) in the derivation of $o_N(A_1), \dotsc, o_N(A_n)\to e_N(B)$, then $\Pi=l$ ($\Pi = r$ resp.).
		\end{lemma}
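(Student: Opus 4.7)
The plan is to proceed by structural induction on the height of the subderivation culminating in $\Pi \to l$ (the $\Pi \to r$ case will be symmetric), leveraging the subformula property for cut-free proofs in $\LC^\neck_2$: every type occurring anywhere in the derivation is a subtype of some type appearing in the root sequent $o_N(A_1),\dots,o_N(A_n)\to e_N(B)$.

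The first thing I would do is carry out a direct enumeration of the subtypes of the pattern $A^\Box = (l\backslash((l\cdot A)\cdot r))/r$ — the only gateway through which the fresh primitive $l$ can enter the conclusion. The goal is to verify that $l$ always appears either as a standalone primitive, as the left argument of $\backslash$ (in $l\backslash\dots$), or as the left factor of $\cdot$ (in $l\cdot A$); in particular, no subtype of any $o_N(A_i)$ or $e_N(B)$ has the shape $X\backslash l$, $X/l$, $l/X$, or $X\cdot l$. This combinatorial verification is where the bookkeeping is densest, and it is essentially the only substantive content of the argument; since $l$ is fresh, nested $^\Box$'s produce no new occurrences beyond this pattern.

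With that in place, the inductive case split is routine. Any axiom with succeedent $l$ must be $l \to l$, giving $\Pi = l$. Every right rule produces a composite succeedent, and the rules $(\neck)$, $(\to^\neck)$, $(^\neck\to)$ each force the succeedent to be of the form $A^\neck$, so none of them can yield $\Pi \to l$. For each of the remaining left rules — $(\cdot\to)$, $(\backslash\to)$, $(/\to)$, $(\cdot\to)_2$, $(/\to)_2$ — the corresponding premise inherits succeedent $l$, so the induction hypothesis would force its antecedent to be the single type $l$. In the product-based rules this instantly conflicts with the fact that the premise antecedent contains at least two distinct types; in the division-based rules it would force a type of shape $X\backslash l$, $X/l$, or $(D\backslash l)/A$ to appear as a subtype of the conclusion, contradicting the subtype enumeration. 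Hence the only surviving case is the axiom itself, so $\Pi = l$. The $\Pi \to r$ case is handled identically by observing that $r$ symmetrically occupies only right-slot positions in the $^\Box$ pattern.
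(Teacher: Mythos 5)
Your proof is correct and takes essentially the same route as the paper's one-sentence argument: both rest on the observation that, by the subformula property of the cut-free derivation and the shape of $A^\Box=(l\backslash((l\cdot A)\cdot r))/r$, no left rule can enlarge the antecedent of a sequent with succeedent $l$ (resp.\ $r$), so the only such sequent is the axiom $l\to l$. Your version just makes the induction and rule-by-rule case analysis explicit; the only (harmless) slip is that the $(/\to)$ case produces a type of shape $l/X$ rather than $X/l$, and both shapes are already excluded by your enumeration.
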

		To prove this lemmma, it suffices to notice that no rule can be applied to the sequent $l \to l$ in such a way that the antecedent remains the same, and types in the antecedent are subtypes of types from $o_N(A_1), \dotsc, o_N(A_n)\to e_N(B)$.
		
		Now, we proceed with proving the induction step. We consider the last rule applied in the derivation; applying the induction hypothesis, we can always assume that the premises of the last rule application are of one of three kinds described above. 
		\begin{enumerate}
			\item The last rule applied is $(/\to)$:
			$$
			\infer[(/\to)]{\Gamma, B / A, \Pi, \Delta \to C}{\Gamma, B, \Delta \to C & \Pi \to A }
			$$
			The following cases are possible:
			\begin{enumerate}
				\item $\Gamma, B, \Delta \to C$ and $\Pi \to A$ are of kind 1; then $\Gamma, B / A, \Pi, \Delta \to C$ is of kind 1 as well.
				\item $\Gamma, B, \Delta \to C$ is of kind 2, and $\Pi \to A$ is of kind 1. Then $l$ is necessarily in $\Gamma$, and $r$ is necessarily in $\Delta$; the rule application does not affect them, hence we obtain a sequent of kind 2.
			\end{enumerate}
			\item Simiarly, rules $(\backslash \to)$, $(\to /)$, $(\to \backslash)$, $(\cdot \to)$, $(\to \cdot)$, $(\to ^\neck)$, and $(\neck)$ are considered.
			\item If rules $(/ \to)_2$, $(\to /)_2$, $(\cdot \to)_2$, or $(\to \cdot)_2$, are applied in such a way that they do not involve $l$ and $r$, then such cases can be considered similarly (moreover, for these cases we can decompose each of such rules into two ones of $\LC$).
			\item The rule $(/ \to)_2$ involves $l$ and $r$. Then, the rule application must be of the form
			$$
			\infer[(/\to)_2]{\Gamma, l, (l \backslash B)/r, r, \Delta \to C}{\Gamma, B, \Delta \to C & l \to l & r \to r }
			$$
			Since these sequents occur within the derivation of $o_N(A_1), \dotsc, o_N(A_n)\to e_N(B)$, the type $(l \backslash B)/r$ is a subtype of $o_N(A_i)$ for some $i$ or of $e_N(B)$. Consequently, $B$ must be of the form $(l \cdot D) \cdot r$, and $C$ must be of the form $(l \cdot E) \cdot r$. The sequent $\Gamma, (l \cdot D)\cdot r, \Delta \to C$ can be only of kind 3, if $\Gamma=\Delta=\varepsilon$; in such a case the derivation is the following:
			$$
			\infer[(/\to)_2]{l, D^\Box, r \to (l \cdot E) \cdot r}{(l \cdot D) \cdot r \to (l \cdot E) \cdot r & l \to l & r \to r }
			$$
			\item The rule $(\cdot \to)_2$ involves $l$ and $r$. Then, the rule application must be of the form
			$$
			\infer[(\cdot \to)_2]{(l \cdot A) \cdot r \to (l \cdot B) \cdot r}{l, A, r \to (l \cdot B) \cdot r}
			$$
			Here the premise is necessarily of kind 2, and the conclusion is of kind 3. Notice that it is necessary that there is only one type except for $l$ and $r$ in the premise since otherwise we cannot apply the rule $(\cdot \to)_2$ to both $l$ and $r$ (this is why we needed to introduce this rule).
			\item The rule $(\to /)_2$ involves $l$ and $r$. Then, the rule application must be of the form
			$$
			\infer[(\to/)_2]{C_1, \dotsc, C_m \to D^\Box}{l, C_1, \dotsc, C_m, r \to (l \cdot D) \cdot r}
			$$
			Hence, it transforms a sequent of kind 2 into a sequent of kind 1.
			\item The rule $(\to \cdot )_2$ involves $l$ and $r$. Then, the rule application must be of the form
			$$
			\infer[(\to \cdot )_2]{l, \Pi, r \to (l \cdot A)\cdot r}{\Pi \to A & l \to l & r \to r}
			$$
			It tranforms a sequent of kind 1 into a sequent of kind 2.
		\end{enumerate}
		Finally, notice that the rule $(^\neck \to )$ cannot be applied (because the types are designed in such a way). 
		
		Now, let us transform the derivation of $o_N(A_1), \dotsc, o_N(A_n)\to e_N(B)$ as follows:
		\begin{itemize}
			\item Rule applications transforming sequents of kind 1 into a sequent of type 1 remain the same.
			\item The rule application of $(\to \cdot )_2$ transforming a sequent of kind 1 into a sequent of kind 2 is replaced by the rule application of $(\to ^\neck)$:
			$$
			\infer[(\to \cdot )_2]{l, \Pi, r \to (l \cdot A)\cdot r}{\Pi \to A & l \to l & r \to r}
			\qquad\rightsquigarrow\qquad
			\infer[(\to ^\neck)]{\Pi \to A^\neck}{\Pi \to A}
			$$
			\item Applications of rules to sequents of kind 2 that do not affect $l$ and $r$ are remodeled as follows:
			$$
			\infer[]{
				l, \Pi^\prime, r \to (l \cdot A)\cdot r
			}{
				l, \Pi, r \to (l \cdot A)\cdot r & \dotsc
			}
			\qquad\rightsquigarrow\qquad
			\infer[]{
				\Pi^\prime \to A^\neck
			}{
				\Pi \to A^\neck & \dotsc
			}
			$$
			``$\dotsc$'' may be empty if the rule $(\cdot \to)$ is applied within $\Pi$, or be a sequent of kind 1 if the rule $(/ \to)$ or $(\backslash \to)$ is applied.
			\item The rule application of $(\cdot \to)_2$ transforming a sequent of kind 2 into a sequent of kind 3 is replaced by the rule application of $(^\neck \to)$:
			$$
			\infer[(\cdot \to)_2]{(l \cdot A) \cdot r \to (l \cdot B) \cdot r}{l, A, r \to (l \cdot B) \cdot r}
			\qquad\rightsquigarrow\qquad
			\infer[(^\neck \to )]{A^\neck \to B^\neck}{A \to B^\neck}
			$$
			\item The rule application of $(/ \to)_2$ transforming a sequent of kind 3 into a sequent of kind 2 is removed (it works together with the rule $(\cdot \to)_2$ described above, so they can be considered jointly as the rule application of $(^\neck \to)$).
			\item The rule application of $(\to /)_2$ transforming a sequent of kind 2 into a sequent of kind 1 is removed (it works together with the rule $(\to \cdot )_2$, so they can be considered jointly as the rule application of $(\to ^\neck)$).
		\end{itemize}
		Therefore, we remodeled a derivation and replaced each $^\Box$ by $^\neck$. The new derivation is correct in $\LC^\neck$, and the resulting sequent is $A_1^\prime, \dotsc,A_n^\prime \to B^\prime$; hence, it is derivable as we aimed to show.
	\end{proof}

	\section{Cut Elimination Theorem for $\mathrm{HL}^{WI}$}\label{app_cut_hlwi}
	\begin{definition}
		Size $|T|$ of a type $T$ is defined inductively as follows:
		\begin{enumerate}
			\item $|p|\eqdef 1$ for $p\in Pr$;
			\item If $T=N\div D$ and $E_D=\{d_0,\dots,d_k\}$ with $lab_D(d_0)$ being equal to \$, then $|T|\eqdef |N|+|lab_D(d_1)|+\dots+|lab_D(d_k)|+1$;
			\item If $T=\times(M)$ and $E_M=\{m_1,\dots,m_k\}$, then $|T|\eqdef |lab_M(m_1)|+\dots+|lab_M(m_k)|+1$.
		\end{enumerate}
	\end{definition}
	\begin{theorem}\label{th_cut_hlwi}
		If $\mathrm{HL}^{WI}\vdash H\to A$ and $\mathrm{HL}^{WI}\vdash G\to B$, then $\mathrm{HL}^{WI}\vdash G[e_0/H]\to B$ where $e_0\in E_G$ and $lab(e_0)=A$.
	\end{theorem}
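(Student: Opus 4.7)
The plan is to follow the standard Gentzen-style cut elimination argument, organized as a double induction: the primary parameter is the size $|A|$ of the cut type, and the secondary parameter is the sum of the lengths of the derivations of $H\to A$ and $G\to B$. As usual, the proof proceeds by case analysis on the last rules applied in the two premises, and splits into commutative cases (where the cut hyperedge $e_0$, or the occurrence of $A$, is not principal in one of these last rules) and principal cases (where $A$ is introduced in both last steps).

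In every commutative case, I would simply push the cut upward past the non-principal rule. For instance, if the last rule in $G\to B$ acts on a hyperedge $e'\ne e_0$ (or rewrites some label other than $lab_G(e_0)=A$), then one rewrites the rule so that its conclusion becomes $G[e_0/H]\to B$, applying the induction hypothesis (smaller combined length, same $|A|$) to each premise that still contains a hyperedge labeled by $A$. The symmetric moves apply on the $H\to A$ side when $A$ is not the type introduced at the last step. The two genuinely principal cases are:

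\begin{enumerate}
    \item $A=N\div D$ introduced by $(\to\div)$ on the right, and eliminated by $(\div\to)$ on the left. Here the derivation of $H\to N\div D$ comes from a derivation of $D[e^\$_D/H]\to N$, while the $(\div\to)$ step used to introduce the hyperedge $e_0$ in $G$ uses premises of the form $G'\to N$ and $H_1\to lab(d_1),\dotsc,H_k\to lab(d_k)$ for suitable $G'$. One first cuts the premise $D[e^\$_D/H]\to N$ against $G'\to N$ (cut on the smaller type $N$, induction hypothesis on $|A|$), and then cuts the result against each $H_j\to lab(d_j)$ (cuts on the proper sub-types of $N\div D$). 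By unfolding the two replacements $G[e_0/D][e^\$_D/(N\div D)^\bullet]$ and checking them against $G[e_0/H]$, one verifies that the resulting hypergraph is exactly $G[e_0/H]$ up to the same substitutions.
    \item $A=\times(M)$ introduced by $(\to\times)$ on the right and eliminated by $(\times\to)$ on the left. Then $H=M[m_1/H_1]\dotsc[m_l/H_l]$ with $H_i\to lab(m_i)$ derivable, and the left premise derivation reduces to $G[e_0/M]\to B$. One successively cuts the $H_i\to lab(m_i)$ against the hyperedges of $M$ inside $G[e_0/M]$ (each cut has smaller type). Together these moves produce $G[e_0/M][m_1/H_1]\dotsc[m_l/H_l]\to B$, which by associativity of hyperedge replacement is $G[e_0/H]\to B$.
\end{enumerate}

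The only new ingredient relative to the cut elimination argument for plain $\mathrm{HL}$ in \cite{Pshenitsyn21} is the extra invariant that every hypergraph appearing in the reduced derivation is without isolated nodes. This is exactly where the observation cited just before the theorem enters: if $G$ and $H$ have no isolated nodes and $e\in E_G$, then $G[e/H]$ also has none, because the replacement only removes a single hyperedge and glues $H$ into $G$ along the attachment nodes of $e$, which are incident in $G$ to at least one other hyperedge or remain external. Every intermediate sequent produced by the commutative and principal reductions is obtained by such replacements from sequents already in the original derivations; hence, since we started within $\mathrm{HL}^{WI}$, the invariant is preserved. The main obstacle I expect is bookkeeping in the principal $(\to\div)/(\div\to)$ case, where the interaction between the substitutions into $D$ and into $G$ must be matched carefully, and where one must verify the no-isolated-nodes invariant for each intermediate hypergraph built by chained hyperedge replacements.
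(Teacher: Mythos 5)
Your proposal matches the paper's proof in all essentials: the same commutative/principal case analysis on the last rules of the two premises, the same reductions in the two principal cases (chaining cuts on the subtypes of $N\div D$ and $\times(M)$ and invoking associativity of hyperedge replacement), and the same key observation that replacement of a hyperedge by a hypergraph without isolated nodes preserves the absence of isolated nodes. The only difference is the termination measure --- you use a lexicographic induction on the size of the cut type and the derivation lengths, whereas the paper inducts on the total size $|\times(H)|+|A|+|\times(G)|+|B|$ of the two sequents --- but both measures decrease under every reduction, so this is immaterial.
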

	\begin{proof}[Proof (of Theorem \ref{th_cut_hlwi}).]
		The proof is by induction on $|\times(H)|+|A|+|\times(G)|+|B|$.
		
		\textbf{Case 1.} $H\to A$ is an axiom $p^\bullet\to p$. Then $G[e_0/H]=G$, so the replacement changes nothing. 
		
		\textbf{Case 2.} $G\to B$ is an axiom $p^\bullet\to p$. Then $A=lab(e_0)=B$, and $G[e_0/H]\to B = H\to A$, so the conclusion coincides with one of the premises.
		
		Let us further call the distinguished type $N\div D$ in rules $(\div\to)$ and $(\to\div)$, and the distinguished type $\times(M)$ in rules $(\times\to)$ and $(\to\times)$ (we mean those from definitions in Section \ref{ssec_def_HL}) the \emph{major type of the rule}.
		
		\textbf{Case 3.} In $H\to A$, the type $A$ is not the major type of the last rule applied. There are two subcases depending on the type of this rule. 
		
		\textbf{Case 3a.} $(\div\to)$:
		$$
		\infer[(\mathrm{cut})]{G[e_0/H]\to B}{
			\infer[(\div\to)]{H\to A}{
				K\to A & H_1\to T_1 & \dots & H_k\to T_k
			}
			&
			G\to B
		}
		$$
		Here $H$ is obtained from $K$ by replacements using $H_1,\dots,H_k$, as the rule $(\div\to)$ prescribes. Note that we omit some details of rule applications that are not essential here (but their role can be understood from the general structure of the rule).
		\\
		This derivation is transformed as follows:
		$$
		\infer[(\div\to)]{G[e_0/H]\to B}{
			\infer[(\mathrm{cut})]{G[e_0/K]\to B}{
				K\to A & G\to B
			}
			&
			H_1\to T_1 & \dots & H_k\to T_k
		}
		$$
		Now we apply the induction hypothesis to the premises and obtain a cut-free derivation for $G[e_0/H]\to B$. Further the induction hypothesis will be applied in a similar way to the premises appearing in the new derivation process. Sometimes the induction hypothesis will be applied several times (from top to bottom, see Cases 5 and 6); however, this will be always legal.
		
		\textbf{Case 3b} $(\times\to)$. Let $f_0\in E_H$ be labeled by a type $\times(K)$, which apears at the last step of a derivation. Then the remodelling is as follows:
		$$
		\infer[(\mathrm{cut})]{G[e_0/H]\to B}{
			\infer[(\times\to)]{H\to A}{
				H[f_0/K]\to A
			}
			&
			G\to B
		}
		\quad\rightsquigarrow\quad
		\infer[(\times\to)]{G[e_0/H]\to B}{
			\infer[(\mathrm{cut})]{G[e_0/H[f_0/K]]\to A}{
				H[f_0/K]\to A & G\to B
			}
		}
		$$
		
		\textbf{Case 4.} The type $A$ labeling $e_0$ within $G$ is not the major type in the last rule in the derivation of $G\to B$. Then one repeats the last step of the derivation of $G\to B$ in $G[e_0/H]\to B$ considering $H$ to be an atomic structure acting as $e_0$. Formally, there are five subcases depending on the last rule applied in the derivation of $G\to B$:
		\begin{enumerate}
			\item $(\div\to)$ if one of the subgraphs $H_i$ contains $e_0$:
			$$
			\infer[(\mathrm{cut})]{G[e_0/H]\to B}{
				H\to A
				&
				\infer[(\div\to)]{G\to B}{
					K\to B & H_1\to T_1\;\dots & H_i\to T_i &\dots\; H_k\to T_k
				}
			}
			$$
			Let $H_i$ contain an edge $e_0$; then this derivation is remodeled as follows:
			$$
			\infer[(\div\to)]{G[e_0/H]\to B}{
				K\to B & H_1\to T_1\;\dots & \infer[(\mathrm{cut})]{H_i[e_0/H]\to T_i}{H\to A & H_i\to T_i} &\dots\; H_k\to T_k
			}
			$$
			
			\item $(\div\to)$ if $e_0$ is not contained in any $H_i$ (then $e_0$ belongs to $E_K$):
			$$
			\infer[(\mathrm{cut})]{G[e_0/H]\to B}{
				H\to A
				&
				\infer[(\div\to)]{G\to B}{
					K\to B & H_1\to T_1\; & \dots & \; H_k\to T_k
				}
			}
			$$
			\begin{center}
				$\downsquigarrow$
			\end{center}
			$$
			\infer[(\div\to)]{G[e_0/H]\to B}{
				\infer[(\mathrm{cut})]{K[e_0/H]\to B}
				{H\to A & K\to B} &
				H_1\to T_1\; & \dots & \; H_k\to T_k
			}
			$$
			\item $(\times\to)$: 
			$$
			\infer[(\mathrm{cut})]{G[e_0/H]\to B}{
				H\to A
				&
				\infer[(\times\to)]{G\to B}{G[f_0/K]\to B}
			}
			$$
			Here $f_0\in E_G$ is labeled by $\times(K)$ (and $f_0\ne e_0$, because $A$ is not major). The remodelling is as follows:
			$$
			\infer[(\to\times)]{G[e_0/H]\to B}{
				\infer[(\mathrm{cut})]{G[f_0/K][e_0/H]\to B}{
					H\to A
					&
					G[f_0/K]\to B
				}
			}
			$$
			\item $(\to\div)$:
			$$
			\infer[(\mathrm{cut})]{G[e_0/H]\to B}
			{
				H\to A
				&
				\infer[(\to\div)]{G\to N\div D}{D[d_0/G]\to N}
			}
			$$
			Here $d_0\in E_D$ is labeled by \$. Then
			$$
			\infer[(\to\div)]{G[e_0/H]\to N\div D}
			{
				\infer[(\mathrm{cut})]{D[d_0/G[e_0/H]]\to N}{
					H\to A
					&
					D[d_0/G]\to N
				}
			}
			$$
			Here we use the associativity property: $D[d_0/G[e_0/H]]=D[d_0/G][e_0/H]$.
			\item $(\to\times)$: 
			$$
			\infer[(\mathrm{cut})]{G[e_0/H]\to \times(M)}{
				H\to A
				&
				\infer[(\to\times)]{G\to \times(M)}{
					H_1\to T_1\;\dots & H_i\to T_i &\dots\; H_k\to T_k 
				}
			}
			$$
			Here $G$ is composed of copies of $H_1,\dots,H_k$ by means of $M$. Since $e_0\in E_G$, there is such a graph $H_i$ that $e_0\in E_{H_i}$. Then we can remodel this derivation as follows:
			$$
			\infer[(\to\times)]{G[e_0/H]\to \times(M)}{
				H_1\to T_1\;\dots & \infer[(\mathrm{cut})]{H_i[e_0/H]\to T_i}
				{H\to A & H_i\to T_i} &\dots\; H_k\to T_k
			}
			$$
		\end{enumerate}
		
		\textbf{Case 5.} $A=\times(M)$ is major in both $H\to A$ and $G\to B$.
		$$
		\infer[(\mathrm{cut})]{G[e_0/H]\to B}
		{
			\infer[(\to\times)]{H\to\times(M)}{H_1\to T_1 & \dots & H_k\to T_k}
			&
			\infer[(\times\to)]{G\to B}{G[e_0/M]\to B}
		}
		$$
		Let us denote $E_M=\{e_1,\dots,e_k\}$ and $lab_M(e_i)=T_i$. Note that $M$ is a subgraph of $K\eqdef G[e_0/M]$, in particular $E_M\subseteq E_K$. Now we are ready to remodel this derivation as follows:
		$$
		\infer[(\mathrm{cut})]{K[e_1/H_1]\dots[e_k/H_k]\to B}
		{
			H_k\to T_k
			&
			\infer[]{K[e_1/H_1]\dots[e_{k-1}/H_{k-1}]\to B}
			{
				\infer[(\mathrm{cut})]{\dots}
				{
					H_2\to T_2
					&
					\infer[(\mathrm{cut})]{K[e_1/H_1][e_2/H_2]\to B}
					{
						\infer[(\mathrm{cut})]{K[e_1/H_1]\to B}
						{
							H_1\to T_1
							&
							K\to B
						}
					}
				}
			}
		}
		$$
		Finally, note that $K[e_1/H_1]\dots[e_k/H_k]=G[e_0/H]$. The induction hypothesis applied several times from top to bottom of this new derivation completes the proof.
		
		\textbf{Case 6.} $A=N\div D$ is major in both $H\to A$ and $G\to B$.
		$$
		\infer[(\mathrm{cut})]{G[e_0/H]\to B}
		{
			\infer[(\to\div)]{H\to N\div D}{D[d_0/H]\to N}
			&
			\infer[(\div\to)]{G\to B}{L\to B & H_1\to T_1 & \dots & H_k\to T_k}
		}
		$$
		Here $d_0\in E_D$ is labeled by \$. We denote edges in $E_D$ except for $d_0$ as $e_1,\dots,e_k$; let $lab_D(e_i)=T_i$ (from above). Note that $e_1,\dots,e_k$ can be considered as edges of $K\eqdef D[d_0/H]$ as well. Observe that $L$ has to contain an edge labeled by $N$ that participates in $(\div\to)$; denote this edge by $\widetilde{e}_0$. Then the following remodelling is done:
		$$
		\infer[(\mathrm{cut})]{L[\widetilde{e}_0/K][e_1/H_1]\dots[e_k/H_k]\to B}
		{
			H_1\to T_1
			&
			\infer[]{L[\widetilde{e}_0/K][e_1/H_1]\dots[e_{k-1}/H_{k-1}]\to B}
			{
				\infer[(\mathrm{cut})]{\dots}
				{
					H_2\to T_2
					&
					\infer[(\mathrm{cut})]{L[\widetilde{e}_0/K][e_1/H_1][e_2/H_2]\to B}
					{
						\infer[(\mathrm{cut})]{L[\widetilde{e}_0/K][e_1/H_1]\to B}
						{
							H_1\to T_1
							&
							\infer[(\mathrm{cut})]{L[\widetilde{e}_0/K]\to B}
							{
								K\to N
								&
								L\to B
							}
						}
					}
				}
			}
		}
		$$
		As a final note, we observe that $L[\widetilde{e}_0/K][e_1/H_1]\dots[e_k/H_k]=G[e_0/H]$. This completes the proof.
		
		We would also like to bring your attention to the fact that new hypergraphs occuring in remodelled derivations are obtained by one or several replacements where all hypergraphs involved in it are without isolated nodes; hence, the resulting hypergraph is without isolated nodes as well. This was exploited in the proof, and this reflects the special nature of $\mathrm{HL}^{WI}$.
	\end{proof}
\end{document}